\def\gnk{G_{n,k}}
\def\agnk{ G(n,k)}
\def\gnkp{G_{n,k'}}
\def\agnkp{ G(n,k')}
\def\rk{\bbr^k}
\def\rkp{\bbr^{k'}}
\def\iagr{\intl_{\agnk}}
\def\iagrp{\intl_{\agnkp}}
\def\Cal{\mathcal}
\def\M{{\Cal M}}
\def\gnk{G_{n,k}}
\def\gnkp{G_{n,k'}}
\def\bbr{{\Bbb R}}
\def\bbc{{\Bbb C}}
\def\bbs{{\Bbb S}}
\def\const{{\hbox{\rm const}}}
\def\span{{\hbox{\rm span}}}
\def\Pr{{\hbox{\rm Pr}}}
\def\gnk{G_{n,k}}
\def\gnkp{G_{n,k'}}
\def\q{\quad}
\def\qq{\qquad}
\def\rn{\bbr^n}
\def\rk{\bbr^k}
\def\part{\partial}
\def\intl{\int\limits}
\def\Gam{\Gamma}
\def\a{\alpha}
\def\om{\omega}
\def\vp{\varphi}
\def\g{\gamma}
\def\gam{\gamma}
\def\sig{\sigma}
\def\z{\zeta}
\def\e{\varepsilon}
\def\t{\tau}
\def\th{\theta}
\def\msL{\mathscr{L}}
\def\msH{\mathscr{H}}
\newtheorem{theorem}{Theorem}[section]
\newtheorem{lemma}[theorem]{Lemma}
\theoremstyle{definition}
\newtheorem{definition}[theorem]{Definition}
\newtheorem{example}[theorem]{Example}
\theoremstyle{remark}
\newtheorem{remark}[theorem]{Remark}
\theoremstyle{corollary}
\newtheorem{proposition}[theorem]{Proposition}
\numberwithin{equation}{section}
\newcommand{\be}{\begin{equation}}
\newcommand{\ee}{\end{equation}}
\newcommand{\bea}{\begin{eqnarray}}
\newcommand{\eea}{\end{eqnarray}}
\newcommand{\Bea}{\begin{eqnarray*}}
\newcommand{\Eea}{\end{eqnarray*}}
\def\sideremark#1{\ifvmode\leavevmode\fi\vadjust{\vbox to0pt{\vss
 \hbox to 0pt{\hskip\hsize\hskip1em
\vbox{\hsize2cm\tiny\raggedright\pretolerance10000
 \noindent #1\hfill}\hss}\vbox to8pt{\vfil}\vss}}}%
\begin{document}

\title[On Radon Transforms ]
{On Radon Transforms Between Lines and Hyperplanes}

\author{Boris Rubin and Yingzhan Wang*}
\thanks{* Supported by the  National Natural Science Foundation of
China under Grant \#11271091 and  the China Scholarship Council under Grant \#201308440083. }
\address{
Department of Mathematics, Louisiana State University, Baton Rouge,
LA, 70803 USA}
\email{borisr@math.lsu.edu}
\address{School of Mathematics and Information Science,
Guangzhou University, Guangzhou 510006, China}
\email{wyzde@gzhu.edu.cn}

\subjclass[2010]{Primary 44A12; Secondary 47G10}



\keywords{Radon  transforms, Grassmann manifolds, Funk transform, Erd\'elyi-Kober operators.}

\begin{abstract}
We obtain new inversion formulas for the Radon transform and its dual between lines and hyperplanes in $\rn$. The Radon transform in this setting is non-injective and the consideration is restricted to the so-called quasi-radial functions that are constant on symmetric clusters of lines. For the corresponding  dual  transform, which is injective, explicit inversion formulas are obtained  both in the  symmetric case and in  full generality. The main tools are  the  Funk transform on the sphere, the Radon-John $d$-plane transform in $\rn$, the Grassmannian modification of the Kelvin  transform,  and the Erd\'elyi-Kober fractional integrals.
\end{abstract}

\maketitle

\section{Introduction}

Let $\msL$ and $\msH$ be  the manifolds  of all non-oriented lines $\ell$ and all non-oriented hyperplanes $h$ in $\bbr^n$, respectively.
In the present article we consider the Radon-like transform that takes  functions  on $\msL$ to  functions on $\msH$. We also consider  the
 corresponding dual transform acting in the opposite direction. Both transforms   are  defined by the  integrals
\be\label{rads} (Rf)(h)=\intl_{\ell\subset h}f(\ell)\,d_h \ell,\qquad (R^*\vp)(\ell)= \intl_{h \supset \ell} \vp(h)\,d_\ell h,\ee
where the integration is performed with respect to  the corresponding canonical measures. Our aim  is to obtain  explicit inversion formulas for $R$ and $R^*$ in the cases when these operators are injective.

The manifolds $\msL$ and $\msH$ are important representatives of the more general vector bundles $G(n,k)$ over the Grassmann manifolds $G_{n,k}$ of $k$-dimensional linear subspaces of $\rn$. Elements of $G(n,k)$ are non-oriented $k$-dimensional affine planes in $\rn$.
 The corresponding  Radon transforms $R_{k,k'}$, $R^*_{k,k'}$ that take functions on $G(n,k)$ to functions on $G(n,k')$, $1\le k<k'\le n-1$, and backwards, were considered by   Gonzalez and   Kakehi  \cite{GK03, GK04} who studied these operators on smooth functions in the group-theoretic terms. The paper   \cite{GK03}  contains an explicit inversion formula  for
 $R_{k,k'}$ in the case of  $k'-k$ even. This formula was obtained by applying the Fourier transform over fibers and using the corresponding inversion formula for   compact Grassmannians due to Kakehi \cite{Ka99}. We also mention the paper \cite{G01} by  Gonzalez that
  contains   the  range description  of  the plane-to-line transform for smooth functions on $\bbr^3$.

 Inversion formulas of different kind for both $R_{k,k'}$ and  $R^*_{k,k'}$ in Lebesgue spaces were obtained by the first-named co-author \cite{Ru04}  who reduced the problem to the compact case with the aid of a certain analogue of the stereographic projection. The dimensions $k$ and $k'$ in \cite{Ru04} can be of arbitrary parity and the corresponding inversion formula for the compact Grassmannians was borrowed from Grinberg and Rubin \cite{GR04}.

 One should also mention the work by Strichartz \cite{Str86}, who developed harmonic analysis on Grassmannian bundles, and a series of publications
 related to  integral geometric problems on compact Grassmannians; see, e.g.,  \cite{Al, AGS, GR04, Ka99, Ru13, SZ, Zha1, Zha2} and references therein. The boundedness of the operators $R_{k,k'}$ and  $R^*_{k,k'}$ in  $L^p$ spaces with power weights was studied in \cite{Ru14}.

It is a challenging open problem to find an alternative approach to inversion formulas for  $R_{k,k'}$ and $R^*_{k,k'}$ that would be straightforward (without  stereographic projection), available for all admissible $k$ and $k'$, and applicable to large classes of functions (not only to $C^\infty$ and rapidly decreasing ones).
 Here one should take into account that for $1\le k<k'\le n-1$, the operators $R_{k,k'}$ and $R^*_{k,k'}$ are injective if and only if $k+k'\le n-1$ and $k+k'\ge n-1$, respectively; see \cite[Propositions 1.3 and 1.4]{Ru04} for precise statements. Because $\dim G(n,k)=(n-k)(k+1)$, the condition $k+k'\le n-1$ is equivalent to $\dim G(n,k)\le \dim G(n,k')$.

In the present article we give a solution to the aforementioned inversion problem in the  case $1=k<k'=n-1$. This simple case  is less technical and  reflects basic features of  Radon transforms on Grassmannian bundles.

The paper is organized as follows. Section 2 contains necessary preliminaries.  It is worth noting that planes in $\rn$ can be parametrized in different ways and our main concern in this section is to choose  suitable parametrizations.

Section 3 is devoted to the operator $R\equiv R_{1,n-1}$. Because \[\dim G(n,1)=2n-2>n= \dim G(n,n-1),\] this operator  is non-injective. The situation changes if we restrict $R$ to functions $f$ satisfying additional symmetry. Specifically, we assume $f$ to be constant on symmetric clusters of parallel lines in every direction. We call such functions  {\it quasi-radial}. It will be shown that if $f$ is quasi-radial, then $Rf$ is  a tensor product of the classical Funk transform on the sphere \cite{GGG, H11, Ru15}
and a one-dimensional fractional integration operator of the Erd\'elyi-Kober type \cite{Ru15}. Both can be explicitly inverted. The main result of this section is presented by Theorem \ref{kuku05a}.

In Section 4 we obtain explicit inversion formulas for the dual transform $R^*\equiv R^*_{1,n-1}$. Here we consider  three different approaches.
The first one deals with even locally integrable   functions $\vp$ on $\msH$ such that $\vp (h)=\vp (-h)$ for all $h \in \msH$, and relies on  averaging over the line clusters. The second approach covers the general case  and invokes a Grassmannian modification of the Kelvin transform introduced in \cite{Ru04}.  Here $\vp$ is assumed to be continuous with prescribed behavior at the origin and at infinity or belonging to some weighted $L^p$-space. The third approach   is based on a certain alternative parametrization of line and hyperplanes. Main results of this section are presented by Theorems \ref{kut},   \ref{ci}, \ref{too1}, and \ref {ffi}.

The methods of the present article can be generalized to the Radon transforms $R_{k,k'}$ and $R^*_{k,k'}$ for arbitrary $0<k<k'<n$. We plan to consider this case in the forthcoming publication.

{\bf Acknowledgements.} The main part of the work was done when the second-named author was visiting Louisiana State University in 2014-2015. Both authors are grateful to the  China Scholarship Council  for the support and the administration of the  Department of Mathematics at Louisiana State University for the hospitality.

 \section{Preliminaries}

\setcounter{equation}{0}

\subsection{Notation.}   We recall that $\agnk$  is the
affine Grassmann manifold  of  non-oriented $k$-dimensional
planes  in $\rn$, $0 < k <n$;  $\gnk$  is the compact
Grassmann manifold  of  $k$-dimensional linear subspaces  of $\rn$. If $\xi\in \gnk$, then  $\xi^\perp $ denotes the
orthogonal complement of $\xi $ in $\rn$.  Each element
  $\t$ of  $\agnk$ is  parameterized by the pair
$(\xi, u)$, where $\xi \in \gnk$ and $ u \in \xi^\perp$. In the following  $ |\t|$ denotes the
 Euclidean distance from the origin to $\t  \equiv\t(\xi, u)$ so that
 $|\t|=|u|$ is the Euclidean norm of $u$. We write $C_0 (\agnk)$ for the space of all continuous functions $f$ on $\agnk$  satisfying $\lim\limits_{|\t| \to \infty} f(\t)=0$. We also set
  \[C_\mu (G(n,k))=\{ f\in   C(G(n,k)) : \, f(\t)=O(|\t|^{-\mu})\}, \]
  \[C_\mu (\rn)=\{ f\in C(\rn): f(x)=O(|x|^{-\mu}),\] where $C(G(n,k))$ and $C(\rn)$ are  the space of continuous functions on $G(n,k)$ and $\rn$, respectively.

The manifold
$\agnk$ will be endowed with the product measure $d\t=d\xi du$,
where $d\xi$ is the
 $O(n)$-invariant probability measure  on $\gnk$ and $du$ is the usual volume element on $\xi^\perp$.

For $k'>k$ and $\eta \in \gnkp$, we denote by  $G_k (\eta)$ the
Grassmann manifold of all $k$-dimensional linear subspaces of
$\eta$.  In the following, $\bbs^{n-1} = \{ x \in \bbr^n: \ |x| =1 \}$ is the unit sphere
in $\bbr^n$.   For $\theta \in \bbs^{n-1}$, $\{\th\}$ denotes the one-dimensional linear subspace spanned by $\th$,
$d\theta$ stands for  the surface element on $\bbs^{n-1}$, and  $\sigma_{n-1} =  2\pi^{n/2} \big/ \Gamma (n/2)$ is the surface area of $\bbs^{n-1}$.  We set $d_*\theta= d\theta/\sigma_{n-1}$ for  the normalized surface element on $\bbs^{n-1}$.

We write $ e_1, \ldots, e_n$ for the coordinate unit
 vectors in $\rn$. Given $1\le k<k'<n$,  the following notations are used for the coordinate planes:
 \[ \rk\!=\!\bbr e_1 \oplus \ldots \oplus\bbr e_k, \quad
 \rkp\!=\!\bbr e_1 \oplus \ldots \oplus \bbr e_{k'}, \quad
  \bbr^{n-k}\!=\!\bbr e_{k+ 1}\oplus \ldots \oplus \bbr
 e_{n}.\]

 \subsection{Radon Transforms on Affine Grassmannians} \label{rafgr}

  Let $\agnk $ and $ \agnkp$ be a pair of affine Grassmann manifolds of non-oriented
 $k$-planes $\t$ and $k'$-planes $\z$ in $\rn$,
 respectively; $ \; 1\le k < k' \le n-1$. We write
 \be \label{para1}\t \! \equiv \! \t(\xi, u)\in \agnk,  \qquad
 \z \!  \equiv \! \z(\eta, v) \in \agnkp,  \ee
where $\xi\in \gnk$, $u \in \xi^\perp$, $\eta \in  \gnkp$, $  v \in \eta^\perp$.
The Radon transform  of a function $f(\t) $ on $\agnk$ is a
function $(R_{k,k'}f)(\z)$  on $\agnkp$ defined by
\be \label {rfold} (R_{k,k'}f)(\z)  = \intl_{\t
\subset \zeta} f(\t)\, d_\z \t= \intl_{\xi \subset \eta} d_\eta \xi
\intl_{\xi^\perp \cap  \eta} f(\xi, v+x) \,dx. \ee Here $d_\eta \xi$
denotes the probability measure on the Grassmannian $G_k (\eta)$.
This transform
integrates   $f(\t) $ over all $k$-planes $\t$   in  the
$k'$-plane $\z$.

The dual Radon transform  of a function $\vp(\z)$ on $\agnkp$ is a function $(R^*_{k,k'}\vp)(\t)\equiv (R^*_{k,k'}\vp)(\xi, u)$  on $\agnk$ defined by \bea (R^*_{k,k'}\vp)(\t)
&=& \intl_{\z \supset \t} \vp(\z)\, d_\t \z= \intl_{\eta  \supset \xi}
\vp(\eta +u) \,d_\xi \eta\nonumber\\
\label{dut}&=&\intl_{\eta \supset \xi}
\vp(\eta,\Pr_{\eta^\perp} u)\, d_\xi \eta. \eea Here
$\Pr_{\eta^\perp} u$ is the orthogonal projection of $u \;
(\in \xi^\perp)$ onto $\eta^\perp (\subset \xi^\perp)$, $d_\xi
\eta$ is the relevant probability measure.  This transform
integrates  $\vp(\z)$ over all $k'$-planes $\z$   containing  the
$k$-plane $\t$. In order to give (\ref{dut}) precise meaning, we choose
an orthogonal transformation  $g_\xi \in O(n)$ so that $g_\xi \rk=\xi$, and let
$O(n-k)$ be the subgroup of $O(n)$ that consists of orthogonal transformations preserving the coordinate plane
$\bbr^{n-k}$. Then (\ref{dut}) means \be (R^*_{k,k'}\vp)(\t) \equiv (R^*_{k,k'}\vp)(\xi, u)=
\intl_{O(n-k)} \vp(g_\xi \rho \rkp +u) \, d\rho. \ee

\begin{proposition} \label{prop1}\cite[Lemma 2.1]{Ru04} The equality
\be \iagrp (R_{k,k'}f)(\z) \,\vp(\z) \, d \z=\iagr f(\t)\, (R^*_{k,k'}\vp)(\t)\, d\t \ee holds provided  that at least one of these integrals exists in the Lebesgue sense
(i.e., it is finite  if $f$ and $\vp$ are replaced by $|f|$ and $|\vp|$,
respectively).
\end{proposition}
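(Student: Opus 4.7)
The plan is to verify the duality by unfolding both sides into a common iterated integral over the flag manifold $\{(\xi,\eta):\xi\subset\eta\}$ fibered over $\rn$, and appealing to Fubini's theorem. Concretely, I would start from the left-hand side, parametrize $\zeta=\zeta(\eta,v)$ with $\eta\in\gnkp$, $v\in\eta^\perp$, and substitute the definition (\ref{rfold}) to obtain
\[
\iagrp (R_{k,k'}f)(\z)\,\vp(\z)\,d\z=\igr[G_{n,k'}] d\eta\irnkp\!\!\!dv\intl_{G_k(\eta)}\!\!\!d_\eta\xi\intl_{\xi^\perp\cap\eta}\!\!\!f(\xi,v+x)\,\vp(\eta,v)\,dx.
\]
(Here I am writing the $\eta$-integration schematically since we only need Fubini at the end.) The integrand is nonnegative once we replace $f,\vp$ by $|f|,|\vp|$, so unrestricted reordering is permitted under the stated Lebesgue hypothesis.

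The first key step is to reverse the order of the double Grassmannian integration. By $O(n)$-invariance, both
\[
\intl_{\gnkp}\!\!d\eta\intl_{G_k(\eta)}\!\!d_\eta\xi\,F(\xi,\eta)\qquad\text{and}\qquad\igr d\xi\intl_{\{\eta\in\gnkp:\,\eta\supset\xi\}}\!\!\!\!d_\xi\eta\,F(\xi,\eta)
\]
define $O(n)$-invariant probability measures on the partial flag manifold of pairs $(\xi,\eta)$ with $\xi\subset\eta$, hence they coincide; this is a classical fact and is the appropriate analogue on compact Grassmannians of the coarea identity. Applying it yields
\[
\igr d\xi\intl_{\eta\supset\xi}\!\!\!d_\xi\eta\irnkp\!\!dv\intl_{\xi^\perp\cap\eta}\!\!\!\!f(\xi,v+x)\,\vp(\eta,v)\,dx.
\]

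The second step is to merge the two Euclidean integrations. Whenever $\xi\subset\eta$ one has the orthogonal decomposition $\xi^\perp=\eta^\perp\oplus(\xi^\perp\cap\eta)$ inside $\rn$, so writing $u=v+x$ with $v=\Pr_{\eta^\perp}u$ and $x=\Pr_{\xi^\perp\cap\eta}u$ the product measure $dv\,dx$ becomes $du$ on $\xi^\perp$. Using this and recalling the definition (\ref{dut}) of $R^*_{k,k'}$, the expression becomes
\[
\igr d\xi\intl_{\xi^\perp}\!\!du\,f(\xi,u)\intl_{\eta\supset\xi}\!\!\!\vp(\eta,\Pr_{\eta^\perp}u)\,d_\xi\eta=\iagr f(\t)\,(R^*_{k,k'}\vp)(\t)\,d\t,
\]
which is the right-hand side.

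I expect the only step requiring genuine care to be the Grassmannian flag identity, since every other manipulation is bookkeeping with orthogonal complements and Fubini. The stated Lebesgue hypothesis legitimates Fubini via the usual monotone/dominated argument applied first to $|f|$ and $|\vp|$ and then to $f,\vp$ themselves, so no delicate convergence issues arise.
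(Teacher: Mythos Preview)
Your argument is correct: the flag identity (uniqueness of the $O(n)$-invariant probability measure on the partial flag $\{(\xi,\eta):\xi\subset\eta\}$), the orthogonal splitting $\xi^\perp=\eta^\perp\oplus(\xi^\perp\cap\eta)$, and Fubini under the stated Lebesgue hypothesis are exactly what is needed, and you have arranged them in the right order. Note, however, that the paper does not give its own proof of this proposition at all; it simply cites \cite[Lemma~2.1]{Ru04}, so there is nothing in the present paper to compare against---your write-up is in fact the standard verification one would expect to find in that reference.
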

\begin{proposition} \label{prop2} ${}$ \hfill

{\rm (i)} If $f \in L^p (\agnk), \; 1 \le p < (n-k)/(k' -k)$,
then $(R_{k,k'}f)(\z)$ is finite for almost all $\z \in \agnkp$. If $ f\in C_\mu (\agnk)$, $\mu >
 k'-k$, then $(R_{k,k'}f)(\z)$ is finite for all $\z \in
 \agnkp$. The conditions $p < (n-k)/(k' -k)$ and $\mu >
 k'-k$ are sharp.

{\rm (ii)}  The dual transform  $(R^*_{k,k'}\vp)(\t)$ is finite a.e. on $\agnk$ for every locally integrable function $\vp$.
\end{proposition}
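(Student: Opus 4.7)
My plan is to establish both parts via Proposition \ref{prop1}, combined with explicit computations of distances and fiber measures.

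For the pointwise assertion in (i) with $f \in C_\mu(\agnk)$, I would argue directly from (\ref{rfold}). For any $\z=\z(\eta,v)$ and any $\xi \subset \eta$, the inner variable $x \in \xi^\perp \cap \eta$ is orthogonal to $v \in \eta^\perp$, so $|v+x|^2=|v|^2+|x|^2\ge |x|^2$. The bound $|f(\xi,v+x)|\le C(1+|x|)^{-\mu}$ then reduces convergence to
$$\intl_{\xi^\perp \cap \eta} (1+|x|)^{-\mu}\,dx<\infty,$$
which holds iff $\mu > k'-k$ since $\dim(\xi^\perp \cap \eta)=k'-k$. Sharpness at $\mu=k'-k$ is exhibited by $f(\t)=(1+|\t|)^{-(k'-k)}$, for which the inner integral diverges for every $\z$.

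For the $L^p$ statement I would use Proposition \ref{prop1} with the indicator $\chi_{B_R}$ of $B_R=\{\z:|\z|\le R\}$, writing
$$\intl_{|\z|\le R}(R_{k,k'}|f|)(\z)\,d\z=\iagr |f|(\t)\,(R^*_{k,k'}\chi_{B_R})(\t)\,d\t.$$
The key step is estimating the kernel on the right. Writing $\t=(\xi,u)$ and $s=|u|$, the projection $\Pr_{\eta^\perp}u$, as $\eta\supset\xi$ varies under $d_\xi\eta$, is the projection of $u \in \xi^\perp$ onto a uniformly random $(n-k')$-dimensional subspace of $\xi^\perp\cong\bbr^{n-k}$. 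The classical Beta-law for such projections gives $|\Pr_{\eta^\perp}u|^2/s^2\sim \mathrm{Beta}((n-k')/2,(k'-k)/2)$, whence
$$(R^*_{k,k'}\chi_{B_R})(\xi,u)\asymp \min\bigl(1,(R/|u|)^{n-k'}\bigr).$$
An elementary polar computation then shows this function belongs to $L^{p'}(\agnk)$ precisely when $(n-k')p'>n-k$, equivalently $p<(n-k)/(k'-k)$; Hölder's inequality closes the argument since $R$ is arbitrary. The endpoint is ruled out by a standard power-type counterexample built from the same profile.

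For (ii), I would test $R^*_{k,k'}|\vp|$ against $\chi_{\{|\t|\le A\}}$ and apply Proposition \ref{prop1} again:
$$\intl_{|\t|\le A}(R^*_{k,k'}|\vp|)(\t)\,d\t=\iagrp (R_{k,k'}\chi_{\{|\t|\le A\}})(\z)\,|\vp|(\z)\,d\z.$$
The geometric fact that $\t\subset\z$ forces $|\z|\le|\t|$ confines $R_{k,k'}\chi_{\{|\t|\le A\}}$ to $\{|\z|\le A\}$, and on that set it is uniformly bounded because the inner integration in (\ref{rfold}) is over a bounded subset of $\xi^\perp\cap\eta$. Local integrability of $\vp$ then makes the right-hand side finite, so $R^*_{k,k'}\vp$ is finite a.e. on each ball and hence almost everywhere. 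The main technical obstacle in the whole argument is the distributional identification yielding the estimate for $R^*_{k,k'}\chi_{B_R}$; once it is in hand, everything else reduces to duality and elementary integration in polar coordinates.
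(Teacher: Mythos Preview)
Your argument is correct. The paper itself does not supply a proof of (i) but cites \cite[Corollary~2.6]{Ru04}; your self-contained treatment via duality with the radial test function $\chi_{B_R}$ and the Beta-law for projections is the natural underlying argument, and your $C_\mu$ estimate and the sharpness examples are fine. For (ii) your approach coincides with the paper's: the paper invokes the identity
\[
\intl_{|\t|<a} (R^*_{k,k'}\vp)(\t)\, d\t=\const \intl_{|\z|<a}\vp(\z)\,(a^2-|\z|^2)^{(k'-k)/2}\,d\z
\]
from \cite[(2.19)]{Ru04}, which is exactly what your duality computation produces---indeed, combining Proposition~\ref{prop1} with (\ref{lmos1}) for $f_0=\chi_{[0,A]}$ gives $R_{k,k'}\chi_{\{|\t|\le A\}}(\z)=\frac{\sig_{k'-k-1}}{k'-k}(A^2-|\z|^2)^{(k'-k)/2}\chi_{\{|\z|\le A\}}$, sharpening your qualitative bound to the explicit weight. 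So the two routes are the same; you have simply unpacked what the paper cites.
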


The statement (i) is proved in   \cite[Corollary 2.6]{Ru04}. The statement (ii) follows from the equality
\be \intl_{|\t|<a} (R^*_{k,k'}\vp)(\t) \, d\t =
 \const \intl_{|\z|<a}\vp (\z) \,(a^2 \! - \! |\z|^2)^{(k'-k)/2} \,d \z  \ee
which is a particular case of the formula (2.19) from  \cite{Ru04}.

\begin{proposition} \label{prop3}\cite[Lemma 2.3]{Ru04} For $\t \in \agnk$ and $\z \in \agnkp$,
let $ r=|\t|, \, s=|\z|$. If $f(\t)=f_0 (r)$ and $\vp(\z)= \vp_0
(s)$, then
\be\label{lmos1} (R_{k,k'}f)(\z)= \sig_{k'-k-1}\intl_s^\infty f_0(r) (r^2 -s^2)^{(k'-k)/2
-1} r dr, \ee
\be\label{lmos2} (R^*_{k,k'}\vp)(\t)= \frac{\sig_{k'-k-1} \,
\sig_{n-k'-1}}{\sig_{n-k-1} \, r^{n-k-2}}\intl_0^r \vp_0 (s) (r^2
-s^2)^{(k'-k)/2 -1} s^{n-k'-1} ds, \ee provided that the  corresponding
integrals exist in the Lebesgue sense.
\end{proposition}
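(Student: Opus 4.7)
The plan is to verify both formulas directly from the definitions \eqref{rfold} and \eqref{dut}, using the parametrization \eqref{para1} and standard integration techniques on Grassmannians.

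For \eqref{lmos1}, I would fix $\z\equiv\z(\eta,v)$ with $|v|=s$. In the inner integral of \eqref{rfold}, the integrand $f(\xi, v+x) = f_0(|v+x|)$ depends only on the norm of $v+x$, and because $v\in\eta^\perp$ is orthogonal to every $x\in\xi^\perp\cap\eta\subset\eta$, we have $|v+x|^2 = s^2+|x|^2$. Thus the integrand is independent of $\xi$, the outer integral over $G_k(\eta)$ collapses, and the remaining integral over the $(k'-k)$-dimensional subspace $\xi^\perp\cap\eta$ is evaluated in polar coordinates followed by the substitution $r=\sqrt{s^2+t^2}$ to produce \eqref{lmos1}.

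For \eqref{lmos2}, I would fix $\t\equiv\t(\xi,u)$ with $|u|=r$. The set $\{\eta\in\gnkp : \eta\supset\xi\}$ is in bijection with the Grassmannian of $(n-k')$-dimensional subspaces $\mu = \eta^\perp$ of $\xi^\perp\cong\bbr^{n-k}$, and \eqref{dut} then rewrites as an $O(n-k)$-invariant integral of $\vp_0(|\Pr_\mu u|)$ against the probability measure on this Grassmannian. The key ingredient is the classical density of the projection length: if $e$ is a fixed unit vector in $\bbr^{n-k}$ and $\mu$ is a uniformly distributed $(n-k')$-dimensional subspace, then $|\Pr_\mu e|^2$ has a Beta distribution with parameters $(n-k')/2$ and $(k'-k)/2$. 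Scaling $u=re$ and substituting $s = r|\Pr_\mu e|$ converts the Grassmannian integral into an integral on $[0,r]$, and converting the Beta normalizing constant via $\sig_{m-1} = 2\pi^{m/2}/\Gamma(m/2)$ produces exactly the prefactor in \eqref{lmos2}.

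The hard part will be the projection-length density used in the second step. A clean derivation goes through the Stiefel manifold: identify $\mu$ with its orthonormal frame, decompose the invariant measure into the measure on frames with last vector fixed and a spherical part, and pass to polar coordinates. Once this density is in place, both formulas reduce to routine substitutions; the remaining bookkeeping with the surface areas $\sig_m$ is the only technically delicate step, since an off-by-one in the exponents of $s$, $r$, or $(r^2-s^2)$ is easy to make.
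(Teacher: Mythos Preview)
The paper does not actually prove this proposition: it is stated with the citation \cite[Lemma 2.3]{Ru04} and no argument is given. Your proposal is a correct and natural proof. For \eqref{lmos1} the collapse of the outer Grassmannian integral is justified because, although the domain $\xi^\perp\cap\eta$ of the inner integral depends on $\xi$, it is always a $(k'-k)$-dimensional Euclidean subspace and the integrand depends only on $|x|$; hence the inner integral has the same value for every $\xi\subset\eta$ and the probability measure $d_\eta\xi$ integrates to $1$. For \eqref{lmos2} your identification of $\eta^\perp$ with a random $(n-k')$-dimensional subspace of $\xi^\perp\cong\bbr^{n-k}$ and the Beta$\bigl((n-k')/2,(k'-k)/2\bigr)$ law for $|\Pr_\mu e|^2$ are exactly right; the substitution $s=r\sqrt{t}$ and the identity
\[
\frac{\sig_{k'-k-1}\,\sig_{n-k'-1}}{\sig_{n-k-1}}=\frac{2\,\Gam((n-k)/2)}{\Gam((k'-k)/2)\,\Gam((n-k')/2)}
\]
then yield \eqref{lmos2} verbatim. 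The Stiefel-manifold derivation you outline for the projection density is one standard route; an equivalent and slightly shorter one is to fix $\mu$ and average over $e\in\bbs^{n-k-1}$ using the slice-integration (bispherical) formula, which is the method most in line with the tools used elsewhere in the paper.
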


\subsection{Alternative Parametrization of Lines and Hyperplanes}

In this subsection we consider the case when the Radon transform $R$ takes a function $f$ on $\msL=G(n,1)$ to a function  $Rf$ on $\msH=G(n,n-1)$.
For the future purposes we parametrize the manifolds  $\msL$ and $\msH$ in a slightly different way in comparison with  subsection \ref{rafgr}. Specifically, let
\be \tilde \msL=\{(\om, u): \; \om \in \bbs^{n-1}, u\in \rn, u\perp \om\}.\ee
Every line $\ell$ has the form  $\ell=\{\om\} +u$, where $(\om, u)\in \tilde \msL$, $\{\om\}=\span (\om)$. Setting $\ell=\ell (\om, u)$,
every function  $f$ on $\msL$  can be regarded as  a function $\tilde f(\om, u)=f(\{\om\} +u)$ on $\tilde \msL$ satisfying $\tilde f(\om, u)=\tilde f(-\om, u)$. We equip  $\tilde \msL$ with the product measure $d_*\om du $, where $d_*\om$ is the normalized surface measure on $\bbs^{n-1}$ and $du$ is the Euclidean volume element on $\om^\perp$. Then, for $f\in L^1 (\msL)$,
  \be\label{kuk}\intl_{\msL} f (\ell)\, d\ell=\intl_{\tilde \msL}  \tilde f(\om, u)\, d_*\om du=\intl_{\bbs^{n-1}}d_*\om \intl_{\om^\perp} \tilde f(\om, u)\, du,\ee
where the integral on the left-hand side has the same meaning as in subsection \ref{rafgr}.

For the hyperplane case  $h\in \msH=G(n,n-1)$, in parallel with the parametrization $h=h(\eta,v)$, where $\eta \in G_{n,n-1}$ and $v\in \eta^\perp$ (cf. (\ref{para1})), we set
$h=h(\th, t)=\{x\in \rn: x\cdot \th =t\}$, where $\th \in\bbs^{n-1}, t\in \bbr$. Let
\[\tilde \msH=\{(\th, t): \th \in\bbs^{n-1}, t\in \bbr\}.\]
Every function $\vp$ on $\msH$  can be thought of as a function $\tilde \vp(\th, t)=\vp(\th^\perp, t\th)$    on the cylinder $\tilde \msH$, so that
 $\tilde\vp(\th, t)=\tilde\vp(-\th, -t)$. Endowing $\tilde \msH$ with the measure $d_*\th dt$, we get
 \bea\intl_{\msH} \vp (h)\, dh&\equiv& \intl_{G_{n,n-1}} d\eta \intl_{\eta^\perp} \vp(\eta,v)\, dv=\intl_{O(n)} d\gam \intl_{\bbr} \vp(\gam e_n^\perp, t\gam e_n)\, dt \nonumber\\
&=&\label{kuk1}  \intl_{\bbs^{n-1}} d_*\th \intl_{\bbr} \vp(\th^\perp, t\th)\, dt=\intl_{\tilde \msH} \tilde \vp(\th, t)\,d_*\th dt.\eea
Abusing notation,  we identify
\be\label{kuku}
 \msL \equiv \tilde \msL, \qq  \msH \equiv \tilde \msH, \qq f\equiv\tilde f, \qq \vp\equiv\tilde \vp.\ee
According to (\ref{rfold}) and the identification (\ref{kuku}), the Radon transform (\ref{rfold}) can be written
  in the new parametrization as
\be\label{kuku01}
(Rf)(\th, t)=\intl_{\bbs^{n-1}\cap \th^\perp} d_\th \om\intl_{\om^\perp\cap \th^\perp} f(\om, t\th +x)\, dx, \qq (\th, t)\in \tilde\msH, \ee
(set $\eta=\th^\perp$, $v=t\th$, $\xi=\{\om\}$). Here $d_\th \om$ is the probability measure on $\bbs^{n-1}\cap \th^\perp$ that is invariant under orthogonal transformations leaving $\th $ fixed. Similarly, (\ref{dut}) yields
\be\label{kuku02}
(R^*\vp)(\om, u)=\intl_{\bbs^{n-1}\cap \om^\perp}\vp(\th, \th \cdot u)\,  d_\om\th, \qq (\om, u) \in \tilde\msL,\ee
(use the equality $\Pr_{\{\th\}} u=\th \th^Tu$, where $\th$ is interpreted as a matrix with one column and $\th^T$ is its transpose).
By Proposition \ref{prop1} and equalities (\ref{kuk}) and (\ref{kuk1}),
\be\label{du5}
\intl_{\tilde\msH} (Rf)(\th, t)\, \vp(\th, t)\,d_*\th dt=\intl_{\tilde\msL}  f(\om, u) \,(R^*\vp)(\om, u)\, d_*\om du\ee
provided that at least one of these integrals exists in the Lebesgue sense.

\section{Inversion of the Radon Transform of Quasi-Radial Functions}

As we mentioned in Introduction, the Radon transform that takes functions on $\msL$ to functions on $\msH$ is noninjective  because $\dim \msL> \dim \msH$. To remedy the situation, we need  to reduce the dimension of the source space or  increase the dimension  of  the target space. Of course, the geometrical meaning of the problem  should remain unchanged.

Below we  pursue the first approach. Suppose that the value of $f$ at $\ell=\ell (\om, u)$  depends only on $\om$ and $|u|$, that is,  $f(\om, \cdot)$ is constant on the set of all lines equidistant from the central line $\{\om\}$. We call such a set a {\it line  cluster} and denote
 \be \label {nsfo} \text{\rm cl} (\om,r)=\{\ell (\om, u)\in \msL:\, |u|=r\},  \qquad \om\in\bbs^{n-1}, \quad r>0.\ee
 A line function $f$ is called {\it quasi-radial} if it is constant on  all clusters (\ref{nsfo}), that is, $f(\om, u)=f_0(\om, |u|)$ for some function $f_0$ and all or almost all $(\om, u)\in\tilde\msL$. A more restrictive, {\it radial} case, when $f(\om, u)=f_0(|u|)$, that is, $f(\ell)$ depends only on the distance from the origin to $\ell$,  was studied in \cite{Ru04}; cf. (\ref {lmos1}).
The set of all clusters (\ref{nsfo}) has the same dimension $n$, as the set $\msH$ of all hyperplanes. Hence it is natural to expect that
 the restriction of the Radon transform (\ref{kuku01}) to quasi-radial functions   is injective.

\begin{lemma}\label {asi} If  $f$ is quasi-radial,   $f(\om, u)=f_0(\om, |u|)$, then
\be\label{kuku03} (R f)(\th,t)=\sig_{n-3}\intl_{|t|}^{\infty}(r^2-t^2)^{(n-4)/2} \,rdr\intl_{\bbs^{n-1}\cap \th^\perp} f_0(\om, r)\,d_\th \om\,
\ee
provided that  the integral on the right-hand side exists in the Lebesgue sense.
\end{lemma}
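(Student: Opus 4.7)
The plan is to compute the right-hand side of the defining formula (\ref{kuku01}) for the Radon transform directly, using the quasi-radial structure of $f$ and a polar decomposition of the inner integration domain.

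First I would observe that since $\om \in \bbs^{n-1} \cap \th^\perp$, we automatically have $\om \perp \th$, so for any $x \in \om^\perp \cap \th^\perp$ the vectors $t\th$ and $x$ are mutually orthogonal. Thus
\[
|t\th + x|^2 = t^2 + |x|^2,
\]
and the quasi-radiality gives
\[
f(\om, t\th + x) = f_0\bigl(\om, \sqrt{t^2 + |x|^2}\bigr).
\]
Substituting into (\ref{kuku01}) reduces $(Rf)(\th,t)$ to
\[
\int_{\bbs^{n-1}\cap \th^\perp} d_\th \om \int_{\om^\perp \cap \th^\perp} f_0\bigl(\om, \sqrt{t^2 + |x|^2}\bigr)\, dx.
\]

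Next I would evaluate the inner integral. The linear subspace $\om^\perp \cap \th^\perp$ has dimension $n-2$ (since $\om \perp \th$ forces it to be the orthogonal complement of the $2$-plane $\span\{\om,\th\}$). Passing to polar coordinates $x = \rho\sig$ with $\rho > 0$ and $\sig$ on the unit sphere $\bbs^{n-3}$ of this subspace, we get $dx = \rho^{n-3}\, d\rho\, d\sig$, and since the integrand depends on $x$ only through $|x|=\rho$, the $\sig$-integration yields the factor $\sig_{n-3}$. The change of variable $r = \sqrt{t^2 + \rho^2}$ (so $r\,dr = \rho\, d\rho$ and $\rho^{n-4} = (r^2-t^2)^{(n-4)/2}$) with new limits $r \in [|t|, \infty)$ then gives
\[
\int_{\om^\perp \cap \th^\perp} f_0\bigl(\om, \sqrt{t^2 + |x|^2}\bigr)\, dx = \sig_{n-3} \intl_{|t|}^{\infty} f_0(\om, r)(r^2-t^2)^{(n-4)/2} r\, dr.
\]

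Finally, inserting this into the outer integral over $\bbs^{n-1}\cap \th^\perp$ and interchanging the order of integration (justified by Fubini under the stated Lebesgue hypothesis) produces (\ref{kuku03}).

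There is no serious obstacle here; the argument is essentially bookkeeping. The two points that require a little care are verifying that $\dim(\om^\perp \cap \th^\perp) = n-2$ for every admissible $\om$ (so the sphere $\bbs^{n-3}$ is the correct object and the surface area constant is $\sig_{n-3}$) and invoking Fubini to swap the $\om$- and $r$-integrations at the end. The case $n=2$ is degenerate but irrelevant, since for $n=2$ the Radon transform between lines and hyperplanes collapses and the formula is vacuous.
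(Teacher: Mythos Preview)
Your proof is correct and follows essentially the same approach as the paper: both compute the inner integral in (\ref{kuku01}) by observing $|t\th+x|^2=t^2+|x|^2$, passing to polar coordinates in the $(n-2)$-dimensional subspace $\om^\perp\cap\th^\perp$, and then substituting $r=\sqrt{t^2+\rho^2}$. The paper's proof is simply terser, stopping at the intermediate form $\sig_{n-3}\int_{\bbs^{n-1}\cap\th^\perp}d_\th\om\int_0^\infty f_0(\om,\sqrt{s^2+t^2})\,s^{n-3}\,ds$ and declaring it equal to (\ref{kuku03}); your version spells out the substitution and the Fubini step explicitly.
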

\begin{proof} Passing to polar coordinates in (\ref{kuku01}), we obtain
\[
(Rf)(\th, t)=\sig_{n-3} \intl_{\bbs^{n-1}\cap \th^\perp} d_\th \om\intl_0^\infty f_0(\om, \sqrt{s^2+t^2})\,s^{n-3}\,ds.\]
This coincides with (\ref{kuku03}).
\end{proof}

The formula (\ref{kuku03}) is a generalization of  (\ref{lmos1})  for  $k=1$ and $k'=n-1$. It shows that on
quasi-radial functions, $(R f)(\th,t)$ is a constant multiple of the tensor product of the classical Funk transform
\be\label{Funk.aa}
 (F\psi)(\th)=\intl_{\bbs^{n-1}\cap \th^\perp}  \psi(\om) \,d_\th \om, \qquad \th \in \bbs^{n-1},
\ee
and the  fractional integration operator of the Erd\'elyi-Kober type
\be\label{eci} (I^\a_{-, 2} \chi)(t)=\frac{2}{\Gam
(\a)}\intl_t^\infty (r^2 - t^2)^{\a -1} \chi (r)  \, r\, dr,\qquad
t>0,\ee
with $\a= n/2-1$. Both operators were  studied systematically in \cite [Sections 5.1 and 2.6.2] {Ru15}. Thus, for $t>0$, we can write
\be\label{kuku04}
(Rf)(\th, t)= \pi^{n/2-1}   \,(\tilde R f_0)(\th, t),\qq \tilde R=  I^{n/2-1}_{-, 2} \otimes F,\ee
where $F $ acts in the $\om$-variable and $ I^{n/2-1}_{-, 2}$ in the $r$-variable, as in  (\ref{kuku03}).

By Lemma 2.42 from \cite[p. 65]{Ru15} and the existence results for the Funk transform (see, e.g.,  \cite[p. 281]{Ru15}, the integral (\ref{kuku03}) is absolutely convergent for almost all $(\th, t) \in \tilde\msH$ provided that
\be\label{kuku05} \intl_{\bbs^{n-1}} d\om\intl_{a}^{\infty}|f_0(\om, r)|  \,r^{n-3} \,dr<\infty  \q \forall \,a>0,\ee
where the exponent $n-3$ in (\ref{kuku05}) is exact.

A variety of inversion formulas for $F$ and $ I^\a_{-, 2}$ can be found in \cite[subsections 5.1.6-5.1.8, 2.6.2]{Ru15}. For example, the following statement is an immediate consequence of the formulas (2.6.23), (2.6.25), and (5.1.96) from \cite{Ru15}.
\begin{theorem} \label{kuku05a} Let $\vp=Rf$, $f(\om, u)=f_0(\om, |u|)$, where $f_0$ satisfies (\ref{kuku05}). Then
\be\label{kuku06}
f_0(\om, r)= \pi^{1-n/2}   \,(\tilde R^{-1} \vp)(\om, r),\qq \tilde R^{-1}=  \Cal D^{n/2-1}_{-, 2} \otimes F^{-1}.\ee
The Erd\'elyi-Kober  derivative $\Cal D^{n/2-1}_{-, 2}$ is defined by the following formulas:
\be\label{frr+z3a}
\Cal D^{n/2-1}_{-, 2} \chi=(- D)^{n/2-1} \chi , \quad D=\frac {1}{2r}\,\frac {d}{dr},\ee
if $n$ is even, and
 \be\label{frr+z3}  \Cal D^{n/2-1}_{-, 2} \chi=  r\,(- D)^{(n-1)/2} r^{n-2}I^{1/2}_{-,2} \,r^{1-n}\,\chi,\ee
if $n$ is odd, where the powers of $r$ stand for the corresponding multiplication operators. Furthermore,
\be\label{90ashel}
(F^{-1}\psi)(\om) \!=  \! \lim\limits_{t\to 1}  \left (\frac {1}{2t}\,\frac {\partial}{\partial t}\right )^{n-2} \!\left [\frac{2}{(n-3)!}\intl_0^t
(t^2 \!- \!s^2)^{n/2-2} \,\Phi_\om (s) \,s^{n-2}\,ds\right ],\qquad\qquad\ee
\[\Phi_\om (s)=\intl_{\bbs^{n-1}\cap
\om ^\perp}
\!\!\psi(s\, \xi +\sqrt{1-s^2}\,\om)\,d_\om\xi, \qquad -1\le s\le 1.\]
The  limit in (\ref{90ashel}) is understood in the $L^1(\bbs^{n-1})$-norm.
\end{theorem}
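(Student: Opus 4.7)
The plan is to exploit the tensor-product factorization (\ref{kuku04}): on quasi-radial inputs one has
\[
(Rf)(\th,t) = \pi^{n/2-1}\bigl(I^{n/2-1}_{-,2}\otimes F\bigr)f_0(\th,t),
\]
where $I^{n/2-1}_{-,2}$ acts in the radial variable $t$ and the Funk transform $F$ acts in the spherical variable $\om$. Because the two factors act on independent variables they commute, so inverting $\tilde R$ reduces to inverting each factor in its own variable. The inverse must therefore be $\tilde R^{-1}=\Cal D^{n/2-1}_{-,2}\otimes F^{-1}$, and the constant $\pi^{1-n/2}$ in (\ref{kuku06}) emerges directly from (\ref{kuku04}).

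I would then carry out the two inversions one after the other. Fixing $\om$ as a parameter, I would first apply $\Cal D^{n/2-1}_{-,2}$ in the radial variable: formulas (2.6.23) and (2.6.25) of \cite[Sec.~2.6.2]{Ru15} yield exactly (\ref{frr+z3a}) for even $n$ (iterated application of $-D$ where $D=(2r)^{-1}\partial_r$) and (\ref{frr+z3}) for odd $n$ (iterated application of $-D$ preceded by an auxiliary $I^{1/2}_{-,2}$ to raise the half-integer index to an integer). Applied to (\ref{kuku04}), this produces the identity $(Ff_0)(\om,r)=\pi^{1-n/2}(\Cal D^{n/2-1}_{-,2}\vp)(\om,r)$. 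A final application of $F^{-1}$ in the form (\ref{90ashel}), imported from \cite[formula (5.1.96)]{Ru15}, then recovers $f_0(\om,r)$ and yields the claimed (\ref{kuku06}).

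The main obstacle is the rigorous justification of the Fubini-type exchange that lets one invert the two factors independently, together with the verification that each intermediate object lies in the natural domain of the next operator. Under the sharp hypothesis (\ref{kuku05}), Lemma \ref{asi} guarantees that $(Rf)(\th,t)$ is finite a.e.; the mapping properties of $I^{n/2-1}_{-,2}$ in \cite[Sec.~2.6.2]{Ru15} then give meaning to $(\Cal D^{n/2-1}_{-,2}\vp)(\om,\cdot)$ for almost every $\om$, and the class of admissible inputs for $F^{-1}$ described in \cite[subsections~5.1.6--5.1.8]{Ru15} ensures convergence of the $L^1(\bbs^{n-1})$-limit in (\ref{90ashel}). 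I would first verify the identity on a dense subclass of smooth compactly supported quasi-radial functions, where every manipulation is legitimate pointwise, and then extend to the full hypothesis by continuity using the same mapping properties. The odd-$n$ case will require slightly more care because of the interplay between the extra $I^{1/2}_{-,2}$ factor and the multiplication operators $r^{n-2}$, $r^{1-n}$, but the cited semigroup and index-shift formulas in \cite[Sec.~2.6.2]{Ru15} handle this directly.
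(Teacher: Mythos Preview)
Your proposal is correct and follows essentially the same route as the paper: the authors simply state that the theorem is an immediate consequence of the tensor-product factorization (\ref{kuku04}) together with the cited inversion formulas (2.6.23), (2.6.25), and (5.1.96) from \cite{Ru15}. You have filled in more of the justification (the Fubini-type exchange, the density argument) than the paper does, but the underlying argument is the same.
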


\begin{remark}\label{buku02b} {\rm If the function $f$ in Theorem \ref{kuku05a} is smooth, then (\ref{90ashel}) can be replaced by the corresponding expression in terms of the Beltrami-Laplace operator on $\bbs^{n-1}$; see \cite[Theorem 5.37]{Ru04}.} If $f$ is a radial function, i.e., $f(\om, u)=f_0(|u|)$, the spherical component $F$ in (\ref{kuku04}) and (\ref{kuku06}) disappears and we simply have $\tilde R^{-1}=  \Cal D^{n/2-1}_{-, 2}$.  The last formula agrees with \cite[Lemma 2.3]{Ru04}.
\end{remark}

\section{Inversion of the Dual Transform }

\subsection{The Dual Transform of Even Functions}

In this subsection  we confine to hyperplane functions $\vp$ with the property
\be\label{bse}\vp (h)=\vp (-h)  \quad \forall h\in\msH.\ee  A pair $(h, -h)$ of hyperplanes is a natural counterpart of the line cluster  (\ref{nsfo}). It is convenient to take  the dual transform $R^*$ in the form  (\ref{kuku02}), namely,
\be\label{kuku02b}
(R^*\vp)(\om, u)=\intl_{\bbs^{n-1}\cap \om^\perp}\vp(\th, \th \cdot u)\,  d_\om\th,\ee
where $\om \in \bbs^{n-1}$ and $u\in \om^\perp$. In this notation, the function $\vp$ on $\msH$ is identified with a function on the cylinder $\tilde\msH=\{(\th, t):\, \th\in  \bbs^{n-1}, t\in \bbr\}$ satisfying \be\label{ukuy}\vp(\th,t)=\vp(-\th, -t)\quad \forall \; (\th,t)\in \tilde\msH.\ee
Moreover, if $\vp$ is even in the sense of (\ref{bse}), we additionally have
\be\label{ok1}\vp(\th,t)=\vp(\th, -t)=\vp(-\th,t)\quad \forall \; (\th,t)\in \tilde\msH.\ee

\begin{remark} We pay attention to the following interesting fact.
Unlike the Radon transform $R$  that takes quasi-radial functions  on $\msL$ to even functions on $\msH$  (cf. Lemma \ref{asi} and (\ref{ok1})),
 the dual  transform $R^*$ of an  even function  on $\msH$ is not necessarily a quasi-radial function  on $\msL$. Let, for example,
 $n=3, k=1$, and choose  $\vp(\th,t)=|t\th_2|$, where $\th=(\th_1,\th_2,\th_3)\in \bbs^2$ and $t\in\bbr$. Obviously, $\vp$ satisfies (\ref{ok1}).
By (\ref{kuku02b}),
\[(R^*\vp)(e_1, e_2)=\frac{1}{2\pi} \intl_{\bbs^2\cap e_1^{\perp}}\vp(\th,\,\th\cdot e_2)\,d\th.\]
To evaluate this integral, we set \[ \th =\g(\a)\,e_2, \qquad \g(\a)=\begin{bmatrix}1&0&0\\0&\cos\a & \sin\a \\ 0 & -\sin\a &\cos\a \end{bmatrix}, \qquad 0\le \a\le 2\pi.\]
Then
\bea(R^*\vp)(e_1, e_2)&=&\frac{1}{2\pi} \intl_{0}^{2\pi}\vp\left (\begin{bmatrix} 0 \\ \cos\a \\-\sin\a\end{bmatrix}, \cos\a \right)\,d\a\nonumber\\
&=&\frac{1}{2\pi}\intl_{0}^{2\pi}\cos^2\a\,d\a=\frac{1}{2}.\nonumber\eea
Similarly,
\[
(R^*\vp)(e_1, e_3)=\frac{1}{2\pi}\intl_{0}^{2\pi}|\sin\a\cos\a|\,d\a=\frac{1}{\pi}\,.
\]
Thus $(R^*\vp )(e_1, e_2)\neq (R^*\vp)(e_1, e_3)$ which means that $R^*\vp$ is not quasi-radial.
\end{remark}

Now we proceed to reconstruction of $\vp (h)\equiv\vp(\th,t)$ from $(R^*\vp)(\om, u)$ assuming that $\vp$ enjoys the symmetry (\ref{bse})   (or (\ref{ok1})).
Suppose  $u\neq 0$ and apply the slice integration formula from \cite[formula (A.11.12)]{Ru15} to the cross-section $\bbs^{n-1}\cap \om^\perp$. Owing to (\ref{ukuy}), we can write (\ref{kuku02b}) as
\be\label{uku}
(R^*\vp)(\om, u)=c\intl_{0}^{1} \tilde d t \!\!
\intl_{(\bbs^{n-1}\cap\om^{\perp})\cap\tilde{u}^{\perp}} \!\!\!\!
\vp(\sqrt{1-t^2}\sig+t\tilde{u}, tr)\, d_*\sig,\ee
where
\[  c= \frac{2\,\sig_{n-3}}{\sig_{n-2}},\qq\tilde d t=(1-t^2)^{(n-4)/2}dt, \qq r=|u|, \qq \tilde{u}=u/|u|,\]
and $d_*\sig$ denotes the probability measure on the $(n-3)$-dimensional sphere $(\bbs^{n-1}\cap\om^{\perp})\cap\tilde{u}^{\perp}$.

Let $O(\om^\perp) \subset O(n)$ be the stationary subgroup of   the vector $\om$.
 For a function  $f=f(\om,u)$ on $\tilde \msL$ (i.e., $\om\in \bbs^{n-1}$, $u\in\om^{\perp}$), we introduce the mean value operator
 $$
 (\M_{\om}f)(r)=\intl_{O(\om^\perp)}f(\om, r\rho \tilde u)\, d\rho,\qq r>0,$$
that averages $f\equiv f(\ell)$ over the cluster $\text{\rm cl}(\om,r)$.

\begin{lemma} If $\vp$ satisfies (\ref{ok1}), then, for $\om\in \bbs^{n-1}$, $\,r>0$,  and $c=2\,\sig_{n-3}/\sig_{n-2}$,
\be\label{uku1}
(\M_{\om}R^*\vp)(r)=\frac{c}{r^{n-3}}\intl_{0}^{r}(r^2-t^2)^{(n-4)/2}\,dt\!\!\!\intl_{\bbs^{n-1}\cap\om^{\perp}}\!\!\!\vp(\th,t)\, d\th
\ee
provided that the integral on the right-hand side exists in the Lebesgue sense.
\end{lemma}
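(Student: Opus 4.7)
The plan is to apply $\M_\om$ directly to the slice-decomposed expression \eqref{uku} for $R^*\vp$, interchange integration orders via Fubini, and use the $O(\om^\perp)$-transitivity on $\bbs^{n-1}\cap\om^\perp$ to collapse the inner $(n-3)$-sphere integral. Substituting $u\mapsto r\rho\tilde u$ in \eqref{uku} and integrating over $\rho\in O(\om^\perp)$ against the normalized Haar measure, local integrability of $\vp$ (Proposition~\ref{prop2}(ii)) justifies Fubini and gives
\[
(\M_\om R^*\vp)(r) = c\intl_0^1 \tilde d t\, \intl_{O(\om^\perp)} d\rho \intl_{(\bbs^{n-1}\cap\om^\perp)\cap(\rho\tilde u)^\perp} \vp\bigl(\sqrt{1-t^2}\sig+t\rho\tilde u,\,tr\bigr)\, d_*\sig.
\]

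The next step is the change of variable $\sig=\rho\sig'$ in the inner sphere-integral, which straightens the moving domain to the fixed sphere $\bbs^{n-1}\cap\om^\perp\cap\tilde u^\perp$, preserves $d_*\sig'$ by rotational invariance, and pushes all the $\rho$-dependence into $\vp$ in the form $\vp(\rho\xi, tr)$ with $\xi:=\sqrt{1-t^2}\sig'+t\tilde u\in\bbs^{n-1}\cap\om^\perp$. Swapping the $\rho$- and $\sig'$-integrals and invoking $O(\om^\perp)$-transitivity on $\bbs^{n-1}\cap\om^\perp$ (with Haar probability matching the normalized surface measure $d_*\eta$), the inner average becomes
\[
\intl_{O(\om^\perp)}\vp(\rho\xi, tr)\,d\rho = \intl_{\bbs^{n-1}\cap\om^\perp}\vp(\eta, tr)\,d_*\eta,
\]
which no longer depends on $\sig'$, so the remaining $\sig'$-integral just contributes $\intl d_*\sig'=1$.

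What survives is
\[
(\M_\om R^*\vp)(r) = c\intl_0^1 (1-t^2)^{(n-4)/2}\,dt \intl_{\bbs^{n-1}\cap\om^\perp}\vp(\eta, tr)\,d_*\eta,
\]
and the substitution $s=tr$ (giving $dt=ds/r$ and $(1-t^2)^{(n-4)/2}=r^{-(n-4)}(r^2-s^2)^{(n-4)/2}$) produces \eqref{uku1}, modulo reconciling $d_*\eta$ with the $d\th$ appearing in the statement. The main (mild) obstacle is bookkeeping the probability-vs.-surface-measure normalizations so that the constant on the right-hand side really is $c=2\sig_{n-3}/\sig_{n-2}$; once this is settled, the argument is purely Fubini plus $O(\om^\perp)$-invariance, with the evenness hypothesis \eqref{ok1} already absorbed into $c$ at the stage where \eqref{uku} was folded from $t\in[-1,1]$ to $t\in[0,1]$.
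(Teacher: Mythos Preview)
Your argument is correct and follows the paper's proof essentially step for step: apply $\M_\om$ to the slice formula \eqref{uku}, straighten the domain via $\sig=\rho\sig'$, swap the $\rho$- and $\sig'$-integrals, use $O(\om^\perp)$-transitivity to turn the $\rho$-average into the full $\bbs^{n-1}\cap\om^\perp$ average, integrate out the now-dummy $\sig'$, and rescale $t\mapsto t/r$. Your closing remark about $d_*\eta$ versus $d\th$ is apt---the paper's own displayed computation in fact ends with the probability measure $d_*\tilde\th$, so the $d\th$ in the statement should be read as normalized; and the folding to $[0,1]$ in \eqref{uku} uses only \eqref{ukuy} (which \eqref{ok1} implies), not the full evenness.
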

\begin{proof} Changing the order of integration and using (\ref{uku}), we obtain
\bea (\M_{\om}R^*\vp)(r)&=&\intl_{O(\om^\perp)} (R^*\vp)(\om,r\rho \tilde u)\, d\rho\nonumber\\
&=&c\intl_{0}^{1} \tilde d t\intl_{O(\om^\perp)}d\rho \intl_{(\bbs^{n-1}\cap\om^{\perp})\cap (\rho\tilde u)^{\perp}} \!\!\!\!
\vp(\sqrt{1-t^2}\sig+t\rho\tilde{u}, tr)\, d_*\sig.\nonumber\eea
The latter can be transformed as follows.
\bea (\M_{\om}R^*\vp)(r)&=&c\intl_{0}^{1} \tilde d t\intl_{O(\om^\perp)}d\rho \intl_{(\bbs^{n-1}\cap\om^{\perp})\cap \tilde u^{\perp}} \!\!\!\!
\vp(\sqrt{1-t^2}\rho\th +t\rho\tilde{u}, tr)\, d_*\th\nonumber\\
&=&c\intl_{0}^{1} \tilde d t \intl_{(\bbs^{n-1}\cap\om^{\perp})\cap \tilde u^{\perp}}d_*\th\intl_{O(\om^\perp)}\vp(\rho(\sqrt{1-t^2}\th +t\tilde{u}), tr)\,d\rho\nonumber\\
&=&c\intl_{0}^{1} \tilde d t \intl_{\bbs^{n-1}\cap\om^{\perp}}\vp(\tilde\th, tr)\,d_*\tilde\th\nonumber\\
&=&\frac{c}{r^{n-3}}\intl_{0}^{r}(r^2-t^2)^{(n-4)/2}dt\intl_{\bbs^{n-1}\cap\om^{\perp}}\vp(\tilde\th,t)\,d_*\tilde\th.\nonumber
\eea
This gives (\ref{uku1}).
\end{proof}

The right-hand side of (\ref{uku1}) is a constant multiple of the tensor product (up to weight factors) of the  Funk transform (\ref{Funk.aa})
 and the left-sided modification  of the Erd\'elyi-Kober operator   (\ref{eci}). Specifically,
let
\be\label {as34b12} (I^\a_{+, 2} \chi)(t)=\frac{2}{\Gam
(\a)}\intl_0^r (r^2 -t^2)^{\a -1}\chi (t) \, t\, dt,\qq \a>0;\ee
cf.  \cite [formula (2.6.8)] {Ru15}. Then (\ref{uku1}) can be written as
\[
(\M_{\om}R^*\vp)(r)= c_1\,r^{3-n}\, ([I^{n/2-1}_{+, 2} t^{-1}\otimes F]\,\vp)(\om,r),\qq c_1= \frac{\Gam (n-1)/2)}{\pi^{1/2}} .\]
Here  $F $ acts in the $\th$-argument of $\vp$ and $ I^{n/2-1}_{+, 2}$ acts in the $t$-argument, as in (\ref{uku1}). Note that, owing to (\ref{ok1}), the operator $F$ is injective.

 The integral $(I^\a_{+, 2} \chi)(t)$ is absolutely convergent for almost all $r>0$ whenever $t\chi(t)$ is a locally integrable function on $\bbr_+$; see \cite [p. 65] {Ru15}. It follows that (\ref{uku1}) holds for every locally integrable even function $\vp$ on $\tilde\msH$.
 The inversion procedure for the operator  $I^\a_{+, 2}$ is described in \cite [p. 67] {Ru15}.  In particular,
 \be\label{kur1}
\Cal D^{n/2-1}_{+, 2} \chi=D^{n/2-1} \chi , \qquad D=\frac {1}{2r}\,\frac {d}{dr},\ee
if $n$ is even, and
 \be\label{kur2}  \Cal D^{n/2-1}_{+, 2} \chi=  D^{(n-1)/2}I^{1/2}_{+,2} \chi,\ee
if $n$ is odd.

  Thus an even hyperplane function $\vp$ can be explicitly reconstructed from the line function $R^*\vp$ as follows.

\begin{theorem} \label{kut} Let $\vp$ be a locally integrable  function on $\tilde\msH$ satisfying  (\ref{ok1}), and let
\[ \Phi (\om,r)= \frac{r^{n-3}\pi^{1/2}}{\Gam (n-1)/2)} \, (\M_{\om}R^*\vp)(r), \qq \om\in \bbs^{n-1}, \quad r>0.\]
Then  for $t>0$,
 \be\label{kut1}
\vp(\th, t)=([F^{-1}  \otimes  t \Cal D^{n/2-1}_{+, 2}]\Phi)(\th,t),\ee
where  the inverse Funk transform $F^{-1} $ acts in the $\om$-argument of $\Phi$ by the formula (\ref{90ashel}) and the Erd\'elyi-Kober derivative $\Cal D^{n/2-1}_{+, 2}$, defined by (\ref{kur1})-(\ref{kur2}),  acts in the $r$-argument of $\Phi$.
\end{theorem}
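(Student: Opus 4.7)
The plan is to read off (\ref{kut1}) directly from the tensor-product factorization that was established in the paragraph preceding the theorem. Combining identity (\ref{uku1}) with the constant calculation that produced $c_1=\Gamma((n-1)/2)/\pi^{1/2}$, one obtains the compact identity
\[
\Phi(\om, r) = \bigl([\, I^{n/2-1}_{+,2}\, t^{-1}\, \otimes\, F\, ]\,\vp\bigr)(\om, r),\qq \om\in\bbs^{n-1},\q r>0,
\]
where the Erd\'elyi-Kober operator $I^{n/2-1}_{+,2}$ from (\ref{as34b12}) acts in the $t$-slot after premultiplication by $t^{-1}$, and the Funk transform $F$ from (\ref{Funk.aa}) acts in the $\th$-slot. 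Assumption (\ref{ok1}) forces $\vp(\cdot, t)$ to be an even function on $\bbs^{n-1}$ for every $t$, placing us in the function class on which $F$ is injective and the inversion formula (\ref{90ashel}) applies.

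Given this factorization, the recovery of $\vp$ is obtained by inverting the two commuting one-variable operators in their respective slots. First, I apply $F^{-1}$ in the $\om$-variable by means of (\ref{90ashel}). Second, I apply the Erd\'elyi-Kober derivative $\Cal D^{n/2-1}_{+,2}$ in the $r$-variable, using (\ref{kur1}) or (\ref{kur2}) according to the parity of $n$; these are the standard inversion formulas from \cite[Section 2.6.2]{Ru15}. Because the two factors act on disjoint variables, their order is immaterial and the intermediate expressions are well defined by Fubini's theorem. After both inversions have been executed, the resulting function in the radial slot is $r\mapsto r^{-1}\vp(\th, r)$; multiplying by $t$ and reading $r$ as $t$ then yields (\ref{kut1}) for $t>0$. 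Values for $t<0$ are recovered from (\ref{ok1}).

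The only technical point to verify is that the two inversions are legitimate under the sole assumption that $\vp$ is locally integrable on $\tilde\msH$. For the Erd\'elyi-Kober factor, the integral $(I^{n/2-1}_{+,2}g)(r)$ is absolutely convergent for a.e.\ $r>0$ whenever $t\,g(t)$ is locally integrable on $\bbr_+$ (see \cite[p.~65]{Ru15}); in our setting $g(t)=t^{-1}(F\vp(\cdot,t))(\om)$, so this reduces via Fubini to local integrability of $(F\vp(\cdot,t))(\om)$ in $t$ for a.e.\ $\om$, which follows from the local integrability of $\vp$. The subsequent inversion by $\Cal D^{n/2-1}_{+,2}$ is then the standard one from \cite[p.~67]{Ru15}. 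The Funk inversion (\ref{90ashel}) converges in the $L^1(\bbs^{n-1})$-norm, hence produces a well-defined function on the sphere for a.e.\ $r$. I expect this measure-theoretic bookkeeping to be the only genuine obstacle; the algebraic core of the theorem is a direct reassembly of the tensor-product representation (\ref{uku1}) and two inversion formulas already available in \cite{Ru15}.
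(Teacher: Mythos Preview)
Your proposal is correct and follows exactly the route the paper takes: the paper itself provides no separate proof section for this theorem, but simply records the tensor-product factorization $(\M_{\om}R^*\vp)(r)= c_1\,r^{3-n}\, ([I^{n/2-1}_{+, 2} t^{-1}\otimes F]\,\vp)(\om,r)$ immediately after Lemma~\ref{uku1}, notes the convergence condition from \cite[p.~65]{Ru15} and the inversion formulas (\ref{kur1})--(\ref{kur2}), and then states the theorem as a direct consequence. Your write-up makes explicit precisely the steps the paper leaves implicit, including the role of (\ref{ok1}) in ensuring injectivity of $F$ and the final multiplication by $t$.
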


\begin{remark} {\rm As in Remark \ref{buku02b}, if $\vp$  is smooth, then (\ref{90ashel}) can be replaced by the corresponding expression in terms of the Beltrami-Laplace operator. If $\vp$ is  radial, i.e., $\vp(\th, t)=\vp_0(|t|)$, then the spherical component $F^{-1}$ in (\ref{kut1}) disappears. }
\end{remark}

\subsection{The General Case. The Kelvin-Type Transform}

It is known \cite[Section 5]{Ru04} that the Radon transform for a pair of affine Grassmannians and the corresponding dual transform can be expressed one through another by making use of a certain Kelvin-type transformation.
Below we recall this construction.

Let $\t \in \agnk$ be a $k$-plane in $\rn$ not passing
through the origin and parameterized by $\t =\t(\xi, u)$, where $ \xi \in
\gnk, \; u \in \xi^\perp, \; u \neq 0$.
Let $\{\xi, u\}=\span (\xi, u)  \in G_{n,
k+1}$ be the  smallest linear subspace
containing $\t$ and let  $\{\xi, u\}^{\perp} \in G_{n, n-k-1}$ be the
orthogonal complement of $\{\xi, u\}$ in $\rn$. To every $k$-plane $\t =\t(\xi, u)$ with $ u \neq 0$ we associate an
$(n-k-1)$-dimensional plane $\tilde \t=\tilde \t(\tilde \xi, \tilde u)$ defined by
\be\label{ota1}
\tilde \t=\tilde \t(\tilde \xi, \tilde u), \quad \text{\rm where}\quad
\tilde \xi=\{\xi, u\}^{\perp}, \qquad \tilde u=  -\frac{u}{|u|^2},\ee
so that $\tilde u \in {\tilde \xi}^{\perp}$ and $|\tilde u|=|u|^{-1}$ or $|\tilde \t|=|\t|^{-1}$  (we recall that $|\t|$ denotes the Euclidean distance from the origin to the plane $\t$).

The map $\nu :  \t \to \tilde \t$  is called a {\it quasi-orthogonal
inversion map}. It can be regarded as  a Grassmannian modification of the Kelvin transform $x\to x/|x|^2$ for $x \in \rn \setminus \{0\}$.

One can readily see that $\nu$ is an involution, i.e. $\nu=\nu^{-1}$.

By analogy with (\ref{ota1}),  $\nu$  acts from $\agnkp $ to $G(n, n-k'-1)$, so that if $\z=\z(\eta, v)$ with  $v\neq 0$, then $\nu (\z)=\tilde \z$, where
\be\label{ota2}
\tilde \z\equiv \tilde \z(\tilde \eta, \tilde v), \qquad
\tilde \eta=\{\eta, v\}^{\perp}, \qquad \tilde v=  -\frac{v}{|v|^2}.\ee

\begin{example}\label{exa1} If $k=0$ and $\t=x \in \rn \setminus \{0\}$, then $\nu (x)$
 is a hyperplane orthogonal to the vector $x$ and passing through
the point $-x/|x|^2$.
\end{example}

\begin{example}\label{exa2} If $k=1$ and $\t=\ell$ is a line not passing through the origin, that is, $\ell=\ell(\om, u)=\{\om\}+u$, $\om \in \bbs^{n-1}$, $u\in \om^\perp \setminus \{0\}$, then $\nu (\ell)$ is an $(n-2)$-dimensional plane
\[ \pi=\pi (\{\om, u\}^\perp, -u/|u|^2)=\{\om, u\}^\perp -u/|u|^2.\]
\end{example}

\begin{example}\label{exa2a} If $k'=n-1$ and $\z=h(\th,t)=\{x\in \rn: x\cdot \th =t\}$ is a hyperplane in $\rn$ with $t\neq 0$, then $\tilde \z=-\th/t$ is a point in $\rn$.
\end{example}

\begin{definition} Let  $R: f(\t) \to (Rf)(\z)$ be the Radon
transform taking functions on $\agnk$ to functions on $\agnkp, \; k'>k$. If $\tilde \t =\nu (\t)$, and $\tilde \z =\nu (\z)$, then the associated  Radon
transform $\tilde R :  \tilde f(\tilde \z) \to (\tilde R \tilde f)(\tilde\t)$ from functions on
$G(n, n-k'-1)$ to functions on $G(n, n-k-1)$ is called {\it
quasi-orthogonal} to $R$.
\end{definition}

\begin{theorem}\label{exath}   \cite[Theorem 5.5]{Ru04}  Let $0\le k<k' <n$. For a function $\vp$   on $ \agnkp$, we denote
\be\label {hat} (A\vp)(\tilde \z)=|\tilde \z|^{k-n}\vp
(\nu^{-1}(\tilde \z)), \qquad \tilde \z \in G(n, n-k'-1). \ee

{\rm (i)} The following relation holds
\be\label {ild} \intl_{\agnkp}\frac{\vp (\z)
\; d\z}{(1+|\z|^2)^{(k
  +1)/2}}= \frac{\sig_{n-k' -1}}{\sig_{k'}}\intl_{G(n, n-k'-1)}\frac{(A\vp)(\tilde \z)
\; d\tilde \z}{(1+|\tilde \z|^2)^{(k
  +1)/2}} \ee
provided that either side of this equality exists in the Lebesgue sense.

{\rm (ii)} If at least one of the integrals in (\ref{ild}) is finite, then
 \be (R^* \vp)(\t)=c \, |\t|^{k'
-n}(\tilde R A\vp)(\nu (\t)), \qquad c=\frac{\sig_{n-k'
-1}}{\sig_{n-k -1}}. \ee
\end{theorem}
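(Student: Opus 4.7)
The plan is to establish part (i) by a direct change-of-variables computation under $\nu$ and then deduce (ii) either fiberwise or by a test-function duality argument.

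For part (i): I would decompose $v\in\eta^\perp$ into polar coordinates $v=s\om$, with $s=|v|$ and $\om\in\bbs^{n-1}\cap\eta^\perp$, so that $dv=s^{n-k'-1}\,ds\,d\om_\eta$, where $d\om_\eta$ denotes unnormalized surface measure on the $(n-k'-1)$-sphere. The analogous decomposition on the target side gives $d\tilde v=\tilde s^{k'}\,d\tilde s\,d\tilde\om_{\tilde\eta}$. Under $\nu$, one has $\tilde s=1/s$, $\tilde\om=-\om$, $\tilde\eta=\{\eta,\om\}^\perp$, and $|d\tilde s|=ds/s^2$. The key geometric input is the Fubini identity on the partial flag manifold $F(k',k'+1)$: both $(\eta,\om)$ and $(\tilde\eta,\tilde\om)$, modulo antipodal identification, represent the same $O(n)$-invariant probability measure, so after passing between probability and unnormalized surface measures one obtains
\[d\eta\,d\om_\eta=(\sig_{n-k'-1}/\sig_{k'})\,d\tilde\eta\,d\tilde\om_{\tilde\eta}.\]
Substituting this together with $|\tilde\z|=1/s$ and $(A\vp)(\tilde\z)=s^{n-k}\vp(\z)$ into the right-hand side of (\ref{ild}), every power of $s$ cancels cleanly and the claimed equality drops out.

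For part (ii): I would first record the set-theoretic bijection $\{\z\in\agnkp:\z\supset\t\}\leftrightarrow\{\tilde\z\in G(n,n-k'-1):\tilde\z\subset\nu(\t)\}$ induced by $\nu$. The direction condition $\tilde\eta\subset\tilde\xi$ reduces to $\{\xi,u\}\subset\{\eta,v\}$, which is automatic from $\xi\subset\eta$ and $u-v\in\eta$; the affine condition $\tilde v-\tilde u\in\tilde\xi$ is verified by the elementary identity $v\cdot u=|v|^2$, a consequence of $u-v\in\eta$ and $v\in\eta^\perp$. The fiber measure $d_\t\z$ on the source side is the probability measure on $G_{k'-k}(\xi^\perp)$; on the target, $d_{\tilde\t}\tilde\z$ is the product of the probability measure on $G_{n-k'-1}(\tilde\xi)$ with Lebesgue measure on the $(k'-k)$-dimensional affine slice $\tilde\eta^\perp\cap\tilde\t$, from the defining formula for $\tilde R$. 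The Jacobian of the induced change of variables, computed via the same polar decomposition and flag-duality machinery as in (i) but now localized inside $\xi^\perp$, produces the scalar $|\t|^{k'-n}$ from the iterated Kelvin scaling and the constant $c=\sig_{n-k'-1}/\sig_{n-k-1}$ from the resulting spherical normalizations; the weight $|\tilde\z|^{k-n}$ built into $A$ absorbs the residual $\tilde v$-scaling.

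An appealing alternative that sidesteps explicit fiber bookkeeping is to pair both sides of (ii) against an arbitrary admissible test function $\psi$ on $\agnk$: Proposition \ref{prop1} moves $R^*$ to $R$, part (i) rewrites the resulting integral over $\agnkp$ as a weighted integral over $G(n,n-k'-1)$, and the analog of Proposition \ref{prop1} for $(\tilde R,\tilde R^*)$ matches it to the right-hand side after one further change of variables. The principal obstacle in either approach is purely bookkeeping rather than conceptual: scaling factors, spherical normalizations, and Grassmannian measures must all be kept synchronized, since a single misstep in the exponent arithmetic will misplace $c$ or the power of $|\t|$, but nothing beyond the flag-manifold Fubini identity is conceptually new.
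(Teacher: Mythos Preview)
The paper does not actually prove this theorem: it is quoted verbatim as \cite[Theorem 5.5]{Ru04} and used as a black box in what follows, so there is no ``paper's own proof'' to compare your proposal against.

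That said, your approach is sound and is essentially the argument one finds in \cite{Ru04}. Your verification of part (i) is correct; in particular, the flag-manifold Fubini identity you invoke is the right mechanism, and the exponent bookkeeping does cancel as you claim (substituting $\tilde s=1/s$ into $\tilde s^{k'}\,d\tilde s\cdot \tilde s^{k-n}\cdot s^{k+1}/(1+s^2)^{(k+1)/2}$ yields exactly $s^{n-k'-1}/(1+s^2)^{(k+1)/2}$). For part (ii), your set-theoretic bijection check is correct, including the identity $v\cdot u=|v|^2$; the duality alternative via Proposition~\ref{prop1} is also the standard route and is how the original reference proceeds. The only caveat is that you should be explicit about the case $|\t|=0$ (i.e., $u=0$), where $\nu$ is undefined and the formula in (ii) must be understood away from that set; otherwise there are no conceptual gaps.
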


Let us consider the line-to-hyperplane case $1=k<k'=n-1$ in more detail. By Examples \ref{exa2} and \ref{exa2a},
\be\label {hat12} (R^* \vp)(\om, u)=\frac{2}{|u|\,\sig_{n-2}} \,(\tilde R A\vp)(\pi), \quad \pi=\pi (\{\om, u\}^\perp, -u/|u|^2), \ee
where
\be\label {hat1} (A\vp)(x)=|x|^{1-n}\vp (\nu^{-1}(x))\ee
and $\tilde R$ is the Radon-John $(n-2)$-plane transform in $\rn$ \cite {GGG, H11, Ru04a}.
We write (\ref{hat12}) as
\be\label {hat12a} (\tilde R A\vp)(\pi)=\frac{\sig_{n-2}}{2|\pi|} \,(R^* \vp)(\nu^{-1} (\pi)), \quad \pi \in G(n, n-2), \quad 0 \notin \pi.\ee
Inverting  $\tilde R$ by one of the known inversion methods (see, e.g., \cite{H11, Ru04a, Ru13a})
and using (\ref{hat1}), we formally obtain
\be\label {hat5} \vp (h)=|h|^{1-n}  (\tilde R^{-1} \Phi)(\nu (h)), \qquad  h\in \msH,\ee
where
\be\label {hat6} \Phi(\pi)=\frac{\sig_{n-2}}{2|\pi|} \,(R^* \vp)(\nu^{-1} (\pi)).\ee
The equality
\be \label {hat7}\intl_{\msH}\frac{\vp (h)
\; dh}{1+|h|^2}= \frac{2}{\sig_{n-1}}\intl_{\rn}\frac{(A\vp)(x)
\; dx}{1+|x|^2} \ee
which is a particular case of (\ref{ild}), allows us to choose the suitable class of  functions $\vp$. Indeed, by (\ref{hat7}) and
(\ref{hat12a}), the invertibility of $R^* $ on functions $\vp: \msH \to \bbc$ satisfying
\be \label {hat77}\intl_{\msH}\frac{|\vp (h)|
\; dh}{1+|h|^2} <\infty\ee
is equivalent to the invertibility of $\tilde R$ on functions $\tilde\vp: \rn \to \bbc$ satisfying
\be\label {hat78}
\intl_{\rn}\frac{|\tilde\vp(x)|
\; dx}{1+|x|^2} <\infty. \ee
We also  note that by
Theorem 3.2 from \cite{Ru13a}, the condition (\ref{hat78}) guarantees the existence of $(R \tilde\vp)(\pi) $ for almost all $\pi$  (this condition is necessary on nonnegative radial functions $\tilde\vp$).

For the sake of simplicity, we restrict our consideration to the following two subclasses of hyperplane functions satisfying (\ref{hat77}).

For $1\le p<\infty$, we denote
\be\label {mmat1} \tilde L^p (\msH) =\left \{ \vp:\, \intl_{\msH} |h|^{(n-1)(p-1)-2} |\vp (h)|^p \, dh<\infty \right \}.\ee
For $\mu>0$, let  $\tilde C_\mu (\msH)$ be the space of all functions $\vp$ which are
continuous  on the set of all hyperplanes not passing through the origin and satisfy the following condition:
\be\label{dit}
\left\{ \!
 \begin{array} {ll} |h|^{n-1-\mu}\vp(h)=O(1) & \mbox{\rm if  $|h|\to 0$,}\\
 ${}$\\
  |h|^{n-1}\vp(h) \to c=\const   & \mbox{\rm if  $|h|\to \infty$.}\\
   \end{array}
\right.\ee
The space $\tilde C_\mu (\msH)$ is an antipodal modification of the space $C_\mu (\msH)=\{f\in C(\msH):\, f(\t)= O(|\t|^{-\mu})$.
The reason for the above definitions is explained by the following lemma.

\begin{lemma}\label{atu}${}$ \hfill

{\rm (i)} If   $\vp \in \tilde L^p (\msH)$ with    $1\leq p<n/(n-2)$,
then $\vp$ satisfies  (\ref{hat77}). The relations $\vp\in \tilde L^p (\msH) $ and $A\vp\in L^p(\rn)$ are equivalent.

{\rm (ii)}  If   $\vp \in \tilde C_\mu (\msH)$ with    $\mu> n-2$, then $\vp$ satisfies  (\ref{hat77}). The relations  $\vp \in \tilde C_\mu (\msH)$ and $A\vp \in C_\mu(\rn)$ are equivalent.
\end{lemma}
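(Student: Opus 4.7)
The plan is to use the map $A:\vp\mapsto A\vp$ as a bridge between $\msH$ and $\rn$, exploiting the integral identity \eqref{hat7}. Throughout I use the parametrization $h=h(\th,t)$, for which $|h|=|t|$, together with the elementary observation that $|\nu^{-1}(x)|=1/|x|$, which follows from $\nu(h(\th,t))=-\th/t$.

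First I would establish the two norm/space equivalences. For part (i), pass to polar coordinates $x=r\om$ in $\rn$ and substitute $s=1/r$. The Jacobian $r^{n-1}\,dr$ combined with the factor $r^{p(1-n)}$ coming from $|A\vp(x)|^p=r^{p(1-n)}|\vp(\nu^{-1}(r\om))|^p$ collapses to $s^{(n-1)(p-1)-2}\,ds$. Using the antipodal symmetry $\vp(\om,s)=\vp(-\om,-s)$ to unfold the $s$-integral over $(0,\infty)$ into a $t$-integral over $\bbr$, and recognizing the measure $d_*\th\,dt$ on $\msH$ supplied by \eqref{kuk1}, one arrives at
\[
\|A\vp\|_{L^p(\rn)}^p \;=\; \frac{\sig_{n-1}}{2}\,\|\vp\|_{\tilde L^p(\msH)}^p,
\]
which gives the equivalence. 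For part (ii), the pointwise identity $(A\vp)(x)=|x|^{1-n}\vp(\nu^{-1}(x))$ together with $|\nu^{-1}(x)|=1/|x|$ translates the first line of \eqref{dit} into $|A\vp(x)|=O(|x|^{-\mu})$ as $|x|\to\infty$, and the second line into $A\vp(x)\to c$ as $x\to 0$; the latter convergence (rather than mere boundedness) is precisely what allows $A\vp$ to be extended continuously to the origin, yielding $A\vp\in C_\mu(\rn)$.

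Next I would derive \eqref{hat77} in both cases. Applying \eqref{hat7} to $|\vp|$ and $|A\vp|$ gives
\[
\intl_{\msH}\frac{|\vp(h)|}{1+|h|^2}\,dh \;=\; \frac{2}{\sig_{n-1}}\intl_{\rn}\frac{|A\vp(x)|}{1+|x|^2}\,dx,
\]
so it suffices to bound the right-hand side. Under the hypothesis of (i), for $p>1$ I apply H\"older's inequality with exponents $p$ and $p'=p/(p-1)$; the weight integral $\intl_{\rn}(1+|x|^2)^{-p'}\,dx$ converges iff $2p'>n$, which is precisely $p<n/(n-2)$. The case $p=1$ is immediate from the trivial bound $(1+|x|^2)^{-1}\le 1$ and $A\vp\in L^1(\rn)$. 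Under the hypothesis of (ii), the pointwise bound $|A\vp(x)|\le C(1+|x|)^{-\mu}$, a consequence of $A\vp\in C_\mu(\rn)$, gives $\intl_{\rn}|A\vp|/(1+|x|^2)\,dx<\infty$ provided $\mu+2>n$.

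The principal technical obstacle is the careful bookkeeping of the two-fold parametrization ambiguity $(\th,t)\leftrightarrow(-\th,-t)$ of $\msH$ by $\bbs^{n-1}\times\bbr$, which interacts non-trivially with the antipodal symmetry of $\vp$ and with the inversion $r\leftrightarrow 1/r$ underlying the change of variables. Once the correct factor $\sig_{n-1}/2$ linking the two norms has been identified, everything else reduces to routine polar integration and H\"older's inequality.
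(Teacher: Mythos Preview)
Your proof is correct and follows essentially the same strategy as the paper: establish the norm/space equivalence between $\vp$ and $A\vp$, and deduce \eqref{hat77} via H\"older's inequality (with the threshold $p<n/(n-2)$, respectively $\mu>n-2$, emerging from the integrability of the weight). The only notable mechanical difference is in how the identity $\|A\vp\|_{L^p(\rn)}^p=\tfrac{\sig_{n-1}}{2}\|\vp\|_{\tilde L^p(\msH)}^p$ is obtained: you compute it directly by polar coordinates and the substitution $s=1/r$, whereas the paper recognizes $\|A\vp\|_p^p$ itself as an integral of the form \eqref{hat7} for an auxiliary function $\psi(h)=|h|^{(n-1)(p-1)}|\vp(h)|^p(1+|h|^{-2})$ and simply quotes \eqref{hat7}; both routes encode the same change of variables.
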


\begin{proof}  (i) The first statement follows by H\"older's inequality. To prove the second statement, we observe that
\[ ||A\vp||_p^p=\intl_{\rn} |x|^{(1-n)p}\, |\vp(\nu^{-1}(x))|^p\, dx=\intl_{\rn}\frac{(A\psi)(x)
\; dx}{1+|x|^2}, \]
where
\[ (A\psi)(x)=|x|^{1-n}\psi(\nu^{-1}(x)), \qquad \psi(h)= |h|^{(n-1)(p-1)}  |\vp(h)|^p  (1+1/|h|^2).\]
Hence, by (\ref{hat7}),
\[
||A\vp||_p^p=\frac{\sig_{n-1}}{2} \intl_{\msH}\frac{\psi (h) \; dh}{1+|h|^2}=\frac{\sig_{n-1}}{2}\intl_{\msH} |h|^{(n-1)(p-1)-2} |\vp (h)|^p \, dh,\]
as desired.

{\rm (ii)} Both statements can be easily checked straightforward.

\end{proof}

Now we can  formulate the  inversion result for $R^*$. Following \cite[Section 3]{Ru13a},  we introduce the mean value operator
\be\label {ular} (\tilde R^*_x \Phi)(r)= \intl_{SO(n)} \!  \Phi (\gam \bbr^{n-2} +x + r\gam e_n) \,
 d\gam, \qquad r>0. \ee

\begin{theorem}\label{ci} ${}$\hfill

{\rm (i)} The function   $\vp \in \tilde L^p (\msH)$,    $1\leq p<n/(n-2)$,
 can be reconstructed from $(R^* \vp)(\ell)$, $\ell \in \msL$,  by the formula (\ref{hat5}) in which $\tilde R^{-1}$ denotes the inverse Radon-John $(n-2)$-plane transform in $\rn$ that can be represented in different forms. For example, Theorem 3.5 from \cite{Ru13a} yields
\be\label{nnxxzz}(\tilde R^{-1}\Phi)(x) =  \lim\limits_{r\to 0}\, \pi^{1-n/2} (\Cal D^{n/2-1}_{-, 2} \tilde R^*_x \Phi)(r),\ee
where the  limit  is understood in the $L^p$-norm and the Erd\'elyi-Kober derivative $\Cal D^{n/2-1}_{-, 2} $ is defined by (\ref{frr+z3a}) and (\ref{frr+z3}).

{\rm (ii)}  If   $\vp \in \tilde C_\mu (\msH)$ with    $\mu> n-2$, then  (\ref{nnxxzz}) holds with the limit interpreted in the $\sup$-norm.
\end{theorem}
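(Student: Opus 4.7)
The strategy is to reduce everything to the classical inversion of the Radon--John $(n-2)$-plane transform $\tilde R$ on $\bbr^n$ via the Kelvin-type identity already established in (\ref{hat12a}). Starting from a hyperplane function $\vp$ satisfying (\ref{hat77}), the plan is first to rewrite (\ref{hat12a}) as
\[
(\tilde R A\vp)(\pi)=\Phi(\pi),\qquad \Phi(\pi)=\frac{\sig_{n-2}}{2|\pi|}\,(R^{*}\vp)(\nu^{-1}(\pi)),
\]
so that knowledge of $R^{*}\vp$ on $\msL$ is the same as knowledge of $\tilde R(A\vp)$ on $G(n,n-2)$. Once $A\vp$ is recovered, the involutivity of $\nu$ together with $|\nu(h)|=|h|^{-1}$ and the definition (\ref{hat1}) give
\[
\vp(h)=|h|^{1-n}(A\vp)(\nu(h))=|h|^{1-n}(\tilde R^{-1}\Phi)(\nu(h)),
\]
which is exactly formula (\ref{hat5}).

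The next step is to verify that $A\vp$ lies in a function class on $\bbr^n$ to which the known Radon--John inversion formulas apply. This is precisely what Lemma \ref{atu} provides: for $\vp\in\tilde L^p(\msH)$ with $1\le p<n/(n-2)$ we have $A\vp\in L^p(\bbr^n)$, while for $\vp\in\tilde C_\mu(\msH)$ with $\mu>n-2$ we have $A\vp\in C_\mu(\bbr^n)$. In both cases the growth/decay assumptions imply condition (\ref{hat78}), so by Theorem~3.2 of \cite{Ru13a} the $(n-2)$-plane transform $\tilde R(A\vp)(\pi)$ is finite for almost every $\pi$, and the inversion formula is applicable.

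At this point I would invoke Theorem~3.5 from \cite{Ru13a}, which expresses the inverse Radon--John transform through the spherical mean (\ref{ular}) and the Erd\'elyi--Kober derivative $\Cal D^{n/2-1}_{-,2}$, yielding (\ref{nnxxzz}). In the $L^p$ setting the limit as $r\to 0$ holds in $L^p(\bbr^n)$, and in the $C_\mu$ setting it holds uniformly, with these being precisely the convergence modes asserted in parts (i) and (ii) of the theorem. Substituting into $\vp(h)=|h|^{1-n}(A\vp)(\nu(h))$ completes the reconstruction.

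The routine but delicate point, which I expect to be the main obstacle, is tracking the convergence mode through the Kelvin transform: one must check that the $L^p$ (resp.\ uniform) limit for $A\vp$ on $\bbr^n$ translates, under the non-isometric substitution $x=\nu(h)$ and the multiplication by $|h|^{1-n}$, into a meaningful reconstruction of $\vp$ itself. This is exactly the content of the weighted norm equivalence in Lemma \ref{atu}, which matches $\|A\vp\|_{L^p(\bbr^n)}$ with the weight $|h|^{(n-1)(p-1)-2}$ appearing in the definition (\ref{mmat1}) of $\tilde L^p(\msH)$; the analogous correspondence $A:\tilde C_\mu(\msH)\to C_\mu(\bbr^n)$ handles the continuous case. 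Because those equivalences are already available, no further analytic estimate is required beyond cleanly citing \cite[Thm.~3.5]{Ru13a} and the relations assembled in Theorem~\ref{exath}.
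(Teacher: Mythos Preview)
Your proposal is correct and follows exactly the paper's approach: the paper's proof is a single sentence stating that the theorem follows from (\ref{hat12a}) and Lemma~\ref{atu} combined with Theorems~3.5 and~3.4 of \cite{Ru13a}. The only minor difference is that you cite Theorem~3.5 for both cases, whereas the paper invokes Theorem~3.4 of \cite{Ru13a} specifically for the $C_\mu$ (uniform-limit) case in part~(ii).
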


This theorem follows immediately from (\ref{hat12a}) and Lemma \ref{atu} if we apply
 Theorems 3.5 and 3.4 from \cite{Ru13a}.

\begin{remark}
The conditions $1\le p<n/(n-2)$ and $\mu> n-2$  in Theorem \ref{ci} are not only of technical nature. In fact, they are necessary for the existence of $R^* \vp$. Let, for instance,
 \be\label{prfs} \vp_p(h)=|h|^{1-n}(2+1/|h|)^{-n/p}\big [\log(2+1/|h|)\big ]^{-1}\,.\ee
One can readily check that this function belongs to  $\tilde L^p (\msH)$ for any $p>1$. However, if $p\ge n/(n-2)$, then $\vp_p$ is not locally integrable and $R^* \vp_p\equiv \infty$. The latter can be easily seen if we apply  (\ref{lmos2}) with $k=1$, $k'=n-1$. The same function with $p=n/\mu$, belonging to $\tilde C_\mu (\msH)$, gives the necessity of the condition $\mu> n-2$.

\end{remark}

  Another inversion formula for $R^* \vp$ can be obtained if we invert   $\tilde R$ by making use of Theorem 5.4 from \cite{Ru04a}. Let
 \bea
 \kappa_\ell  &=&\intl_0^\infty (1- e^{-t})^\ell \,
t^{-n/2} \, dt \nonumber \\
&=&\left\{ \!
 \begin{array} {ll} \Gam (1-n/2) \,\displaystyle{\sum\limits_{j=1}^\ell {\ell \choose j} \,(-1)^j\,
j^{n/2 -1}} & \mbox{\rm if  $n$ is odd,}\\
\displaystyle{ \frac{(-1)^{n/2}}{(n/2 -1) !}  \sum\limits_{j=1}^\ell {\ell \choose j}\,
(-1)^j\, j^{n/2 -1}
\log j} & \mbox{\rm if  $n$ is even.}\\
  \end{array}
\right. \nonumber \eea

\begin{theorem} \label{too1}   Suppose that  $\Phi$ and $\tilde R^*_{x} \Phi$ are defined by (\ref{hat6}) and (\ref{ular}), respectively.
If  $\vp \in \tilde L^p (\msH)$,    $1\leq p<n/(n-2)$, then for any $\ell>n/2 -1$,
 \be\label{pres} \vp (h)=\frac{\pi^{1-n/2}}{\kappa_\ell  \, |h|^{n-1} }
\intl _0^\infty \Big [ \sum_{j=0}^\ell {\ell \choose j} (-1)^j  (\tilde R^*_{\nu (h)} \Phi)(\sqrt {jr})
 \Big ]\, \frac{dr}{r^{n/2}},\ee where
$\int_0^\infty= \lim\limits_{\e \to 0} \int_\e^\infty$ in the
a.e. sense. If $\vp \in \tilde C_\mu (\msH)$,   $\mu> n-2$, this limit is uniform on every compact subset of $\msH$ not containing hyperplanes through the origin.
\end{theorem}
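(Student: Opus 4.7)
My plan is to deduce this theorem directly from the Kelvin-type reduction established in Section~4.2, combined with a specific wavelet-type inversion formula for the Radon-John $(n-2)$-plane transform in $\rn$. The argument parallels that of Theorem~\ref{ci}, but replaces the pointwise Erd\'elyi-Kober inversion of $\tilde R$ with the convolution-difference inversion of Semyanistyi-Rubin type that is responsible for the constants $\kappa_\ell$ and the binomial weights.

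First I would use (\ref{hat5}) and (\ref{hat6}) to reformulate the problem of recovering $\vp$ from $R^*\vp$ as the problem of inverting the $(n-2)$-plane transform $\tilde R$ applied to $A\vp$. By Lemma~\ref{atu}(i), the hypothesis $\vp\in\tilde L^p(\msH)$ with $1\le p<n/(n-2)$ is equivalent to $A\vp\in L^p(\rn)$; by (ii), $\vp\in\tilde C_\mu(\msH)$ with $\mu>n-2$ is equivalent to $A\vp\in C_\mu(\rn)$. Both of these are precisely the function classes in which the relevant inversion theorem for $\tilde R$ is formulated.

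Next I would invoke Theorem~5.4 of \cite{Ru04a}: for every $\ell>n/2-1$ and every $f$ in either of these two classes,
\[
f(x)=\frac{\pi^{1-n/2}}{\kappa_\ell}\intl_0^\infty\Big[\sum_{j=0}^{\ell}\binom{\ell}{j}(-1)^j(\tilde R^*_{x}\tilde R f)(\sqrt{jr})\Big]\,\frac{dr}{r^{n/2}},
\]
with the integral converging in the a.e. sense in the $L^p$-case and uniformly on compacta of $\rn\setminus\{0\}$ in the $C_\mu$-case. Applying this to $f=A\vp$ and using the identification $\tilde R(A\vp)=\Phi$ from (\ref{hat12a}), I substitute $x=\nu(h)$ and multiply by $|h|^{1-n}$ as dictated by (\ref{hat5}); the resulting identity is exactly (\ref{pres}).

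The remaining technical point is to transfer the convergence modes from $\rn$ back to $\msH$ through the involution $\nu$. Since $\nu$ restricts to a smooth diffeomorphism between $\msH$ minus the hyperplanes through the origin and $\rn\setminus\{0\}$, with a Jacobian that is a fixed power of $|h|$, Lebesgue null sets correspond to null sets and compact subsets of $\rn\setminus\{0\}$ correspond to compact subsets of $\msH$ avoiding the distinguished hyperplanes. I expect this push-forward of convergence to be the main, albeit routine, verification; once it is in place, both the a.e. statement and the compact-uniform statement follow automatically from the corresponding statements in \cite{Ru04a}.
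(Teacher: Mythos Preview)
Your proposal is correct and matches the paper's own argument essentially verbatim: the paper derives Theorem~\ref{too1} by the same Kelvin-type reduction (\ref{hat5})--(\ref{hat12a}) together with Lemma~\ref{atu}, replacing the inversion of $\tilde R$ used in Theorem~\ref{ci} by Theorem~5.4 of \cite{Ru04a}. The transfer of convergence modes through the involution $\nu$ is, as you note, routine and is not spelled out in the paper either.
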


\begin{remark} The right-hand side of (\ref{pres}) resembles
Marchaud's fractional derivative of order $n/2-1$ of the function $(\tilde R^*_{\nu (h)} \Phi)(r)$
 evaluated at $r=0$; cf. \cite[pp. 56, 122]{Ru15}.
 For $n=3$, (\ref{pres})  has
an especially simple form
\be\label{pres1}  \vp (h)=\frac{1}{\pi|h|^2}
 \intl _0^\infty [(\tilde R^*_{\nu (h)} \Phi)(0) -  (\tilde R^*_{\nu (h)} \Phi)(r)]\, \frac{ dr}{r^2}\, . \ee
\end{remark}

\begin{remark}\label{abdx}  It would be  natural to obtain an explicit inversion formula for $R^*\vp$ under the general assumption (\ref{hat77}). This problem is equivalent to inversion of the Radon-John $(n-2)$-plane  transform $\tilde R$ on the space of all functions satisfying (\ref{hat78}).
The last problem can be solved in the sense of distributions, but we cannot give a reference where the desired formula is obtained in the pointwise sense.
 \end{remark}

\begin{remark} {\rm A  function $\vp$ can  be reconstructed from  $R^*\vp$  in a different way if we change the parametrization of the set $\msL$ of all lines.  For example, fix any $x\in \rn$ and set
\be\label{kuu02b}
(R_1^*\vp)(\om, x)=\intl_{\bbs^{n-1}\cap \om^\perp}\vp(\th, \th \cdot x)\,  d_\om\th,\ee
so that
\be\label{kuu}
(R_1^*\vp)(\om, x)=(R^*\vp)(\om, \Pr_{\om^\perp} x).
\ee
For the sake of  simplicity we assume that $\vp$ is smooth. The operator $R_1^*$  averages $\vp$ over all hyperplanes containing the line through the point $x$ in the direction of $\om$.  The function $\tilde \vp_x (\th)=\vp(\th, \th \cdot x)$ is even and can be reconstructed from $(R_1^*\vp)(\om, x)$ using the inverse Funk transform:
\be\label{km02b}\tilde \vp_x (\th)=F^{-1}[(R_1^*\vp)(\cdot, x)](\th)=F^{-1}[(R^*\vp)(\cdot, \Pr_{(\cdot)^\perp} x)](\th).\ee
 It particular, for $x=t\th$, (\ref {km02b}) yields
$\tilde \vp_{t\th} (\th)=\vp(\th, \th \cdot t\th)=\vp(\th, t)$.
This gives the following

\begin{theorem}\label{ffi} An even smooth function $\vp$ on  $\bbs^{n-1} \times \bbr$ can be reconstructed from the dual Radon transform $f(\om, u)=(R^* \vp)(\om, u)$ (see (\ref{kuku02b})) by the formula
\[
\vp(\th, t)=(F^{-1} [f(\cdot, \Pr_{(\cdot)^\perp} x)])(\th)|_{x=t\th}.\]
\end{theorem}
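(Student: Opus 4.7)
The plan is essentially to formalize the chain of identities laid out in the remark preceding the theorem. The core observation is that for each fixed $x\in\bbr^n$, the integrand in (\ref{kuu02b}) depends on the direction variable $\th\in\bbs^{n-1}\cap\om^\perp$ through the auxiliary spherical function $\tilde\vp_x(\th)=\vp(\th,\th\cdot x)$, which is precisely the type of even function to which the classical Funk transform (\ref{Funk.aa}) applies.

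First I would verify that $\tilde\vp_x$ is even in $\th$: since $\vp$ satisfies (\ref{ok1}), we have $\tilde\vp_x(-\th)=\vp(-\th,-\th\cdot x)=\vp(\th,\th\cdot x)=\tilde\vp_x(\th)$. Comparing (\ref{kuu02b}) term-by-term with the definition (\ref{Funk.aa}) then gives the identity $(R_1^*\vp)(\om,x)=(F\tilde\vp_x)(\om)$. Since $\tilde\vp_x$ is smooth (because $\vp$ is smooth and the map $(\th,x)\mapsto(\th,\th\cdot x)$ is smooth) and even, the Funk transform is injective on it, and inversion in the $\om$-variable yields
\[
\tilde\vp_x(\th)=\bigl(F^{-1}[(R_1^*\vp)(\cdot,x)]\bigr)(\th),
\]
where $F^{-1}$ is the operator (\ref{90ashel}) acting in the first argument.

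The next step is the specialization $x=t\th$, for which a direct computation gives $\tilde\vp_{t\th}(\th)=\vp(\th,t\,\th\cdot\th)=\vp(\th,t)$ since $|\th|=1$. To relate this back to $R^*$ itself, I would invoke the projection identity (\ref{kuu}), which asserts $(R_1^*\vp)(\om,x)=(R^*\vp)(\om,\Pr_{\om^\perp}x)$; substituting $f=R^*\vp$ and reading off the substitution $x=t\th$ at the final stage gives exactly the formula claimed in the statement of Theorem \ref{ffi}.

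The only delicate point is ensuring that the inverse Funk transform in (\ref{90ashel}) can be applied to $\tilde\vp_{x}$ uniformly in the parameter $x$, but this is immediate for smooth $\vp$: $\tilde\vp_x$ lies in $C^\infty(\bbs^{n-1})$ for every fixed $x$, and no decay or integrability estimates beyond those already used in Theorem \ref{kuku05a} are required. There is no real obstacle; the argument is essentially unwinding definitions and combining (\ref{kuu02b}), (\ref{kuu}), and the invertibility of $F$ on even smooth spherical functions.
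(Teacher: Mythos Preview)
Your proposal is correct and follows essentially the same approach as the paper: the argument in the remark immediately preceding Theorem \ref{ffi} is precisely the chain of identities you describe---identify $(R_1^*\vp)(\om,x)$ with the Funk transform of $\tilde\vp_x$, invert $F$ on the even smooth function $\tilde\vp_x$, and then specialize $x=t\th$ using (\ref{kuu}). The only minor remark is that the evenness $\tilde\vp_x(-\th)=\tilde\vp_x(\th)$ already follows from the basic symmetry (\ref{ukuy}) rather than requiring the full condition (\ref{ok1}), but this does not affect the argument.
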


A  simple inversion formula in this theorem  can be explained by the fact that the parametrization  of the set of all lines in (\ref{kuu02b}) is redundant. Indeed, the dimension of the set of all pairs  $(\om, x)$ in (\ref{kuu02b}) is $2n-1$, whereas
the dimension of $\msL=G(n,1)$ is $2n-2$.
}
\end{remark}

\end{document}